\newtheorem{theorem}{Theorem}[section]
\newtheorem{lemma}[theorem]{Lemma}
\newtheorem{proposition}[theorem]{Proposition}
\theoremstyle{definition}
\newtheorem{definition}[theorem]{Definition}
\newtheorem{question}[theorem]{Question}
\newcommand{\co}{\mathfrak{c}}
\theoremstyle{remark}
\numberwithin{equation}{section}
\begin{document}

\title[Finite powers of selectively  pseudocompact groups]{Finite powers of selectively  pseudocompact groups}

\author[S. Garcia-Ferreira]{S. Garcia-Ferreira}

\address{Centro de Ciencias Matem\'aticas, Universidad Nacional Aut\'onoma de M\'exico, Campus Morelia, Apartado Postal 61-3, Santa Maria, 58089, Morelia, Michoac\'an, M\'exico}
\email{sgarcia@matmor.unam.mx}

\author[A. H. Tomita]{A. H. Tomita}
\address{Instituto de Matem\'atica e Estat\'istica, Universidade de S\~ao Paulo \\ Rua do Mat\~ao, 1010, CEP 05508-090, S\~ao Paulo, Brazil}
\email{tomita@ime.usp.br}

\thanks{Research of the first-named author was supported
by  CONACYT grant no. 176202 and PAPIIT grant no. IN105614. The second author  has support from CNPq (Brazil) - ``Bolsa de Produtividade em Pesquisa, processo 307130/2013-4''. The research leading to this paper was essentially performed while the first-listed author was visiting the Instituto de Matem\' atica and Estat\' \i stica of University of S\~ao Paulo. He would like to gratefully acknowledge the hospitality received from this institution and the financial support received from the project  ''Grupos topol´\' ogicos fortemente pseudocompactos '', Proc. FAPESP 2016/23440-4 (Aux\' \i lio professor visitante).}

\subjclass[2010]{Primary 54H11, 54B05: secondary 54E99}


\keywords{topological group, pseudocompact, $p$-pseudocompact,  selectively pseudcompact, strongly $p$-pseudcompact, ultrapseudocompact, countably compact, $p$-compact}

\begin{abstract} A space $X$ is called {\it selectively  pseudocompact} if for each sequence $(U_{n})_{n\in \mathbb{N}}$ of pairwise disjoint nonempty open subsets of $X$ there is a sequence $(x_{n})_{n\in \mathbb{N}}$ of points in $X$ such that  $cl_X(\{x_n : n < \omega\}) \setminus \big(\bigcup_{n < \omega}U_n \big) \neq \emptyset$ and $x_{n}\in U_{n}$, for each $n < \omega$. Countably compact space spaces are  selectively  pseudocompact and every selectively  pseudocompact space  is pseudocompact. We show, under the assumption of $CH$, that for every positive integer $k > 2$ there exists a topological group  whose $k$-th power is countably compact but its $(k+1)$-st power  is not selectively  pseudocompact. This provides a positive answer to a question posed in \cite{gt} in any model of $ZFC+CH$.
\end{abstract}

\maketitle

\section{Introduction}

In this article, every space will be Tychonoff and every topological group will be Hausdorff (hence, they will be also Tychonoff). For an infinite set $X$, $[X]^{< \omega}$ will denote the family of all finite subsets of $X$ and $[X]^{\omega}$ will denote the family of all countable infinite subsets of $X$.
 A finite set $\{x_0,...., x_l\}$ of elements of an Abelian group $G$  is called {\it independent} if does not contain $0$ and if $\sum_{i \leq l}n_ix_i \neq 0$, then $n_ix_i = 0$ for each $i \leq l$. A nonempty subset $X$ of  $G$ is called {\it independent} if every finite subset of $X$ is independent. The continuum will be denoted by $\co$.
For $A \subseteq \co$, the symbol $\chi_A: \co \to \{0, 1\}$ stands for the characteristic function of $A$. For each $x \in \{0, 1\}^\co$, we define $sup(x) = \{ \xi < \co : x(\xi) \neq 0 \}$.

\medskip

The following generalization of countable compactness was introduced in  \cite{sy}  with the name ``strong pseudocompactness" (the name was changed it in the paper \cite{doras} since the authors noticed that the term ``strong pseudocompactness"  was already used to name a different topological property). This paper \cite{doras} also contains several results on selectively pseudocompact spaces and its relations with some other pseudocompact like properties.

\begin{definition} A space $X$ is called {\it selectively  pseudocompact} if for each sequence $(U_{n})_{n\in \mathbb{N}}$ of pairwise disjoint nonempty open subsets of $X$ there is a sequence $(x_{n})_{n\in \mathbb{N}}$ of points in $X$ and $x \in X$ such that  $x_{n}\in U_{n}$,  for every $n < \omega$, and $x$ is an accumulation point of $\{ x_n : n < \omega\}$.
\end{definition}

It is evident that every countably compact space is selectively  pseudocompact and every selectively  pseudocompact space is pseudocompact. There is a selectively  pseudocompact group that is not countably compact and a pseudocompact group that is not selectively  pseudocompact (for these two examples the reader is referred to \cite{gt}). However, it was pointed out in  \cite{sy} that  pseudocompact groups have a property very similar to selectively  pseudocompactness:

\begin{theorem}\label{chara} {\bf \cite{sy}} For a topological group $G$, the following conditions are equivalent.
\begin{enumerate}
\item $G$ is pseudocompact.

\item For each family $\{ U_{n} : n\in \mathbb{N} \}$ of pairwise disjoint nonempty open subsets of $G$ there is a discrete subset $D$ of $G$ contained in $\bigcup_{n \in \mathbb{N}}U_n $ such that $cl_G(D) \setminus \big(\bigcup_{n \in \mathbb{N}}U_n \big) \neq \emptyset$ and $|D \cap U_{n}| < \omega$, for every $n \in \mathbb{N}$.

\item For each  family $\{ U_{n} : n\in \mathbb{N} \}$ of pairwise disjoint nonempty open subsets of $G$ there is a discrete subset $D$ of $G$ contained in $\bigcup_{n \in \mathbb{N}}U_n $ such that $cl_G(D) \setminus \big(\bigcup_{n \in \mathbb{N}}U_n \big) \neq \emptyset$, $|D \cap U_{n}| \leq \omega$, for every $n \in \mathbb{N}$, and $D \cap U_{n} \neq \emptyset$ for infinitely many $n's$.
\end{enumerate}
\end{theorem}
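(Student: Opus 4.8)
The plan is to run the cycle $(1)\Rightarrow(2)\Rightarrow(3)\Rightarrow(1)$, with the bulk of the effort in $(1)\Rightarrow(2)$. The implication $(2)\Rightarrow(3)$ is immediate: if $D$ witnesses $(2)$ but meets only finitely many $U_n$, then, each trace $D\cap U_n$ being finite, $D$ is finite, hence closed, contradicting $cl_G(D)\setminus\bigcup_nU_n\neq\emptyset$; so the same $D$ already meets infinitely many $U_n$. For $(3)\Rightarrow(1)$ I would argue contrapositively. If $G$ is not pseudocompact, fix a continuous $f\colon G\to\mathbb R$ with $\sup_G f=+\infty$, choose recursively $a_0<a_1<\cdots$ in $f(G)$ with $a_{k+1}>a_k+1$, and put $U_n=f^{-1}\big((a_n-\tfrac14,a_n+\tfrac14)\big)$; these are nonempty and pairwise disjoint, and the family $\{U_n:n\in\mathbb N\}$ is discrete, since a point with $f$-value $r$ has the neighbourhood $f^{-1}\big((r-\tfrac14,r+\tfrac14)\big)$, which meets at most one $U_n$. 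Hence any $D\subseteq\bigcup_nU_n$ with all traces finite is itself locally finite, so closed and discrete, giving $cl_G(D)=D\subseteq\bigcup_nU_n$; thus no such $D$ — in particular no $D$ as in $(3)$ — has closure escaping $\bigcup_nU_n$, and $(3)$ fails for this family.

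For $(1)\Rightarrow(2)$ the engine is the Comfort--Ross theorem: a pseudocompact group $G$ is precompact, so its completion $K$ is a compact group containing $G$ densely, and $G$ is in fact $G_\delta$--dense in $K$. Given a pairwise disjoint sequence $(U_n)$ of nonempty open subsets of $G$, choose open $W_n\subseteq K$ with $W_n\cap G=U_n$. Since $G$ is pseudocompact and $\{U_n\}$ is infinite it is not locally finite, so some $p\in G$ has every neighbourhood meeting infinitely many $U_n$; pairwise disjointness forces $p\notin\bigcup_nU_n$, hence $p\notin\bigcup_nW_n$ (as $p\in G$), while in $K$ every neighbourhood of $p$ meets infinitely many $W_n$. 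I would then descend to a metrizable quotient: fix a closed normal subgroup $N$ of $K$ with $K/N$ metrizable and quotient map $q$, so that $q(p)$ is an accumulation point of $\{q(W_n)\}$ in the metric space $K/N$, and choose $n_1<n_2<\cdots$ together with $y_k\in q(W_{n_k})$, $y_k\to q(p)$, $y_k\neq q(p)$. Each set $q^{-1}(y_k)\cap W_{n_k}$ is a nonempty $G_\delta$ of $K$ — a nonempty relatively open part of the coset $q^{-1}(y_k)$, which is a $G_\delta$ of $K$ since $K/N$ is metrizable — so $G_\delta$--density lets me pick $z_k\in q^{-1}(y_k)\cap W_{n_k}\cap G=q^{-1}(y_k)\cap U_{n_k}$. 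With $D=\{z_k:k\in\mathbb N\}$ one has $D\subseteq\bigcup_nU_n$, $|D\cap U_n|\le1$, and $D$ discrete in $G$.

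The last step is what I expect to be the main obstacle: exhibiting a point of $cl_G(D)\setminus\bigcup_nU_n$. What is automatic is that, $q$ being continuous, $q(cl_K(D))=\{y_k\}\cup\{q(p)\}$, so the nonempty closed set $A$ of accumulation points of the sequence $(z_k)$ in $K$ is contained in the coset $pN=q^{-1}(q(p))$; a point $w\in A\cap G$ with $w\notin\bigcup_nW_n$ would then lie in $cl_G(D)\setminus\bigcup_nU_n$ and finish the proof. Obtaining such a $w$ is the delicate part. Two observations should drive it: $G\cap N$ is again $G_\delta$--dense in $N$, hence $G\cap pN$ is $G_\delta$--dense in $pN$; and, after passing to a subfamily and shrinking the $W_n$ to cozero sets — taking care that the shrinking neither destroy the accumulation point $p$ nor spoil the requirement of escaping the original $\bigcup_nU_n$ — the union $\bigcup_nW_n$ can be made an $F_\sigma$, so that $pN\setminus\bigcup_nW_n$ is a $G_\delta$ containing $p$. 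The heart of the matter is to engineer the countable set $D$ so that its closure in $K$ genuinely reaches a nonempty $G_\delta$ subset of $pN$ disjoint from $\bigcup_nW_n$, rather than merely some closed subset of the coset that the $G_\delta$--dense set $G$ could miss; once that is done, a final application of $G_\delta$--density produces the required $w$. Everything preceding this extraction is routine bookkeeping with the Comfort--Ross picture and the inverse-limit structure of compact groups.
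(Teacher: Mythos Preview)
First, note that the paper does not actually prove this theorem: it is quoted from \cite{sy} and no argument is given here, so there is no proof in the present paper against which to compare your attempt.

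On the merits of your sketch, there are two real gaps.

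In $(3)\Rightarrow(1)$ you argue that, for your family $U_n=f^{-1}\big((a_n-\tfrac14,a_n+\tfrac14)\big)$, any $D\subseteq\bigcup_nU_n$ with \emph{finite} traces is locally finite and hence closed; you then assert ``in particular no $D$ as in $(3)$'' works. That inference is unjustified: condition $(3)$ allows $|D\cap U_n|\le\omega$, and a countably infinite discrete subset of a single $U_{n_0}$ can perfectly well accumulate at a boundary point of $U_{n_0}$ lying outside $\bigcup_nU_n$. Test this in $G=\mathbb{R}$ with $f$ the identity: take a sequence in $U_0$ converging to the endpoint $a_0-\tfrac14$ and adjoin one point from every other $U_n$; the resulting $D$ is discrete, has countable traces, meets every $U_n$, and its closure contains $a_0-\tfrac14\notin\bigcup_nU_n$. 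So the family you construct does not witness the failure of $(3)$, and this implication is unproved as written.

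In $(1)\Rightarrow(2)$ you yourself isolate the missing step --- producing a point of $cl_G(D)\setminus\bigcup_nU_n$ --- and do not complete it. That gap is genuine. The accumulation set $A\subseteq pN$ of your sequence $(z_k)$ in $K$ is merely closed, and $G_\delta$-density of $G$ in $K$ says nothing about closed sets; your proposed workaround (shrink the $W_n$ to cozero sets so that $pN\setminus\bigcup_nW_n$ is a $G_\delta$) still does not force $A$ to contain a nonempty $G_\delta$ of $pN$, so you cannot yet invoke $G_\delta$-density to land in $G$. Without an additional idea controlling where the $z_k$ accumulate inside the coset $pN$, the argument does not close.
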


 A coun\-ta\-bly compact space whose square is not pseudocompact (for one of these spaces see \cite{gj}) is an example of a selectively  pseudocompact space whose square is not pseudocompact. In the realm of topological groups, it is well-known that  ``the product of pseudocompact groups is pseudocompact'', this fact was established by  W. W. Comfort and K. A. Ross in their classical paper \cite{cr}. All these remarks suggest naturally   the following question  listed in \cite{gt} and in \cite{sy2}.

\begin{question}\label{st} Is selectively  pseudocompactness productive in the class of topological groups?
\end{question}

Let us make some comments about a possible solution to Question \ref{st} inside of a model of $ZFC$. It is an old open problem posed by W. W. Comfort whether or not the product of two countably compact groups is countably compact.
In 1980, E. K. van Douwen \cite{vD80} assuming the existence of a countably compact Boolean group without non-trivial convergent sequences constructed, in $ZFC$, two countably compact groups whose product is not countably compact. Thus Comfort's problem reduces to the existence of  a countably compact Boolean group without non-trivial convergent sequences. The first such a topological group was constructed, assuming $CH$, by
A. Hajnal and I. Juh\'asz \cite{HaJu76}. Other known examples of such  topological groups are constructed either by using  some set-theoretic axiom compatible with $ZFC$ (see for instance \cite{ds}, \cite{DiTk03}, \cite{HvM91}, \cite{To99} and \cite{To05ta}), by a\-ssu\-ming the existence of selective ultrafilters on $\omega$ (\cite{GaToWa05}) or in a Random model (see \cite{SzTo09}). The existence of a countably compact group without non-trivial convergent sequences is still unknown in $ZFC$.

 \medskip

 Our purpose in this paper is the construction  for every positive integer $k > 0$, under the assumption of $CH$,  of a topological group  whose $k$-th power is countably compact but its $(k+1)$-th  power  is not selectively  pseudocompact (concerning spaces, Z. Frol\'{\i}k (see \cite{frolik}) constructed for each $1 < k < \omega$, a space $X$ such that  $X^k$ is countably compact and $X^{k+1}$ is not pseudocompact.).  Unfortunately, we could not answer Question \ref{st} by assuming only the axioms of $ZFC$.

\medskip

For the  construction of the required countably compact groups we shall follow some basic ideas from \cite{SzTo14}. In particular, the following notions and proposition   that were used in that paper.

\begin{definition} Let $\lambda$ be an infinite cardinal and let  $Y$ be an infinite subset of a topological product
$X=\prod_{\alpha <\lambda}X_\alpha$.
\begin{enumerate}
 \item $Y$ is called \it{finally dense} in $X$ if there exists $\beta <\lambda$ such that
$\pi_{\lambda \setminus \beta}[Y]$ is dense in $\prod_{\beta \leq \alpha <\lambda}X_\alpha$, where $\pi_{\lambda \setminus \beta}$ is the projection from $X$ onto $\prod_{\beta \leq \alpha <\lambda}X_\alpha$.

\item If every infinite subset of $Y$ is finally dense in $X$, then we say that  $Y$ is  \it{hereditarily finally dense} ($HFD$) in $X$.

\item  $Y$ has property $(P)$ if the projection $\pi_I: \prod_{\alpha <\lambda}X_\alpha \to \prod_{\alpha \in I}X_\alpha$ satisfies that $\pi_I[Y] = \prod_{\alpha \in I}X_\alpha$ for all $I \in [\lambda]^\omega$.
\end{enumerate}
\end{definition}

Property $(P)$ was first considered in \cite{HaJu76} and $HFD$-spaces were introduced in \cite{hj74}. It was proved in \cite{HaJu76} that a $HFD$-subspace of $\{0, 1\}^{\omega_1}$ with property $(P)$ is countably compact. Because of the diagonal, the powers of a $HFD$ cannot be $HFD$ space.

\begin{proposition} {\bf \cite[Prop. 3.2]{DiTk03}} \label{proposition.dt} Let $\lambda$ be an uncountable regular cardinal and let $X=\prod_{\alpha <\lambda}X_\alpha$ be a product of compact metrizable spaces $X_\alpha$ each of which contains at least two points. Suppose that $Y$ is a subset of $X$ such that $\pi_{[0,\beta)}[Y] =\prod_{\alpha <\beta}X_\alpha$ for each $\beta <\lambda$. If $S\subseteq Y$ is an $HFD$ set in $X$, then $S$ has a cluster point in $Y$, but no sequence in $S$ converges. In particular, if $Y$ is $HFD$ in $X$, then it is a countably compact dense subspace of $X$ which does not contain non-trivial convergent sequences.
\end{proposition}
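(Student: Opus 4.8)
The plan is to establish the two assertions separately. Throughout, write $X_{[\beta,\lambda)}:=\prod_{\beta\le\alpha<\lambda}X_\alpha$ and $\pi_{[\beta,\lambda)}\colon X\to X_{[\beta,\lambda)}$ for the tail projection, and recall that each $X_\alpha$, being compact metrizable, is second countable, while every $X_{[\beta,\lambda)}$ with $\beta<\lambda$ is a nonempty compact space without isolated points (the index set $[\beta,\lambda)$ has size $\lambda\ge\aleph_1$ and each factor has at least two points), hence is dense-in-itself and uncountable.

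\emph{No non-trivial convergent sequence.} Suppose $(s_k)_{k<\omega}$ is a sequence of pairwise distinct points of $S$ converging to some $z\in X$. Then $T:=\{s_k:k<\omega\}$ is an infinite subset of $S$, hence finally dense: $\pi_{[\beta,\lambda)}[T]$ is dense in $X_{[\beta,\lambda)}$ for some $\beta<\lambda$. But the closure of $T$ in $X$ equals the compact set $T\cup\{z\}$, so $\pi_{[\beta,\lambda)}[T]\cup\{\pi_{[\beta,\lambda)}(z)\}=\pi_{[\beta,\lambda)}[T\cup\{z\}]$ is compact, hence closed and dense in $X_{[\beta,\lambda)}$, hence all of it --- making $X_{[\beta,\lambda)}$ countable, which is absurd. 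Since the hypothesis $\pi_{[0,\beta)}[Y]=\prod_{\alpha<\beta}X_\alpha$ forces $Y$ to meet every basic open box, $Y$ is dense in $X$; and the same argument applied to the range of a convergent sequence in the $HFD$ set $Y$ shows $Y$ has no non-trivial convergent sequences. As every infinite subset of an $HFD$ set is again $HFD$, the existence result below applies to every countable subset of $Y$ when $Y$ is $HFD$, giving countable compactness; this yields the ``in particular'' clause.

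\emph{A cluster point of $S$ in $Y$.} Pass to a one-to-one enumeration $S_0=\{s_n:n<\omega\}$ of a countably infinite subset of $S$ (still $HFD$, and a cluster point of $S_0$ is one of $S$). The crux is to find $\beta_0<\lambda$ such that for every finite $F\subseteq\beta_0$ and every basic open box $B$ over $F$ in $\prod_{\alpha<\beta_0}X_\alpha$ for which $A:=\{n:s_n\restriction F\in B\}$ is infinite, the set $\{\pi_{[\beta_0,\lambda)}(s_n):n\in A\}$ is dense in $X_{[\beta_0,\lambda)}$. To produce $\beta_0$, build an increasing sequence $\langle\beta_0^{(k)}:k<\omega\rangle$ below $\lambda$: let $\beta_0^{(0)}$ witness the final density of $S_0$; given $\beta_0^{(k)}$, note there are fewer than $\lambda$ infinite sets $A$ of the above form with $F\in[\beta_0^{(k)}]^{<\omega}$ (second countability gives only countably many $B$ for each $F$), and for each such $A$ the set $\{s_n:n\in A\}$ is an infinite subset of $S$, hence finally dense beyond some ordinal below $\lambda$; let $\beta_0^{(k+1)}<\lambda$ exceed $\beta_0^{(k)}$ and all those ordinals, which is possible because $\lambda$ is regular. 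Set $\beta_0:=\sup_k\beta_0^{(k)}<\lambda$ (here $\cf(\lambda)>\omega$); since any finite $F\subseteq\beta_0$ is contained in some $\beta_0^{(k)}$, this $\beta_0$ works.

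Now choose, by compactness of $\prod_{\alpha<\beta_0}X_\alpha$, a cluster point $q$ of the sequence $(s_n\restriction\beta_0)_{n<\omega}$. Then $q^{\frown}p$ is a cluster point of $(s_n)_{n<\omega}$ in $X$ for \emph{every} $p\in X_{[\beta_0,\lambda)}$: given a basic neighbourhood of $q^{\frown}p$ determined by opens on a finite set $F=F'\cup F''$ with $F'\subseteq\beta_0$ and $F''\subseteq[\beta_0,\lambda)$, shrink the coordinates in $F'$ to a basic box $B$ around $q\restriction F'$; then $A:=\{n:s_n\restriction F'\in B\}$ is infinite since $q$ is a cluster point, and by the choice of $\beta_0$ the set $\{\pi_{[\beta_0,\lambda)}(s_n):n\in A\}$ is dense in the dense-in-itself space $X_{[\beta_0,\lambda)}$ and remains so after deleting finitely many points, so $s_n$ meets the prescribed $F''$-box for infinitely many $n\in A$, whence $s_n$ lies in the neighbourhood for infinitely many $n$. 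Finally $q\in\prod_{\alpha<\beta_0}X_\alpha=\pi_{[0,\beta_0)}[Y]$, so there is $t\in Y$ with $t\restriction\beta_0=q$; then $t=q^{\frown}(t\restriction[\beta_0,\lambda))$ is a cluster point of $(s_n)$, hence of $S$, and lies in $Y$, as required. The step I expect to be the real obstacle is recognising this move: not to build the cluster point coordinate by coordinate --- which entangles initial and tail coordinates and makes membership in $Y$ delicate --- but to absorb into a single bounded initial segment $[0,\beta_0)$ all coordinates on which $S_0$ is constrained, after which the tail is completely free and the projection hypothesis on $Y$ supplies the remaining coordinates inside $Y$; the existence of such a $\beta_0$ is precisely where the $HFD$ property of $S$ and the regularity of $\lambda$ are both essential.
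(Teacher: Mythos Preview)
The paper does not prove this proposition; it is quoted verbatim from \cite{DiTk03} and used as a black box, so there is no proof in the present paper to compare your argument against.

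Your argument is correct on its own merits. The non-convergence part is the standard observation that a convergent sequence would make a tail product $X_{[\beta,\lambda)}$ countable. For the cluster point, your closing-off construction of $\beta_0$ is exactly the right idea: at each stage there are fewer than $\lambda$ pairs $(F,B)$ because $|\beta_0^{(k)}|<\lambda$ and each factor is second countable, so regularity of $\lambda$ lets you collect all the final-density witnesses below $\lambda$, and $\cf(\lambda)>\omega$ gives $\beta_0<\lambda$. The last step---that removing finitely many points from a dense subset of a dense-in-itself $T_1$ space leaves it dense---is what upgrades ``meets the $F''$-box'' to ``meets it for infinitely many $n\in A$'', and you invoke it correctly. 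The use of the projection hypothesis on $Y$ to pull the tail of the cluster point back into $Y$ is precisely the point of that hypothesis.
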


The following result in combination with the Proposition above has been used in \cite{SzTo14} to make finite powers countably compact.

\begin{lemma} \label{menosuno} Suppose that $G$ is the group generated by $\{x_\mu :\, \mu \in {\mathfrak c}\} \subseteq 2^{\omega_1}$ and $\{ \{ \sum_{\mu \in h(i,n)}x_\mu :\, i \in m\}:\, n \in C\}$ has an accumulation point in $G^m$ for each $h:\, m \times C \longrightarrow [{\mathfrak c}]^{<\omega}$, where $1\leq m \leq k$
and $C \in [\omega]^\omega$,  satisfying that the set $\{ h(i,n):\, i<m \text{ and } n\in C\}$ is linearly independent. Then $G^k$ is countably compact.
\end{lemma}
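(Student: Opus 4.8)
\emph{Plan of proof.} The plan is to show that every countably infinite subset of $G^k$ — equivalently, every faithfully indexed sequence $(g^n)_{n\in\omega}$ in $G^k$ — has an accumulation point, which suffices since $G^k$ is a Hausdorff (hence $T_1$) topological group. Given such a sequence, I would write $g^n=(g^n_0,\dots,g^n_{k-1})$ and, using that $G$ is a subgroup of the Boolean group $2^{\omega_1}$ generated by $\{x_\mu:\mu\in{\mathfrak c}\}$, fix finite sets $E^n_i\in[{\mathfrak c}]^{<\omega}$ with $g^n_i=\sum_{\mu\in E^n_i}x_\mu$. The aim is to normalise the double array $(E^n_i)_{i<k,\ n\in\omega}$, by a $\Delta$-system together with finitely many pigeonhole refinements, until $(g^n)_n$ is exhibited as the image, under a fixed continuous affine map between finite powers of $G$, of a sequence to which the hypothesis of the lemma applies verbatim.

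First I would apply a standard $\Delta$-system argument to the finite sets $S^n:=\bigcup_{i<k}E^n_i$, getting an infinite $C\subseteq\omega$ and a root $R$ with $S^n\cap S^{n'}=R$ for distinct $n,n'\in C$; shrinking $C$ finitely often one may also assume that $E^n_i\cap R=R_i$ does not depend on $n\in C$. Putting $D^n_i:=E^n_i\setminus R$ and $r_i:=\sum_{\mu\in R_i}x_\mu$, this gives $g^n_i=r_i+\sum_{\mu\in D^n_i}x_\mu$ with $D^n_i\cap D^{n'}_j=\emptyset$ whenever $n\ne n'$. After a few more pigeonhole passes on $C$ I would arrange that the $\Z/2$-linear type of $(\chi_{D^n_0},\dots,\chi_{D^n_{k-1}})$ is independent of $n\in C$: there are indices $i_1<\dots<i_m$ with $1\le m\le k$ such that $\{\chi_{D^n_{i_1}},\dots,\chi_{D^n_{i_m}}\}$ is a basis of the span of $\{\chi_{D^n_0},\dots,\chi_{D^n_{k-1}}\}$, and fixed scalars $c_{jl}\in\{0,1\}$ with $\chi_{D^n_j}=\sum_{l=1}^{m}c_{jl}\chi_{D^n_{i_l}}$ for all $j<k$ and $n\in C$. (Necessarily $m\ge1$: if $m=0$ every $D^n_i$ is empty and $(g^n)_{n\in C}$ is constant, impossible on the infinite set $C$.)

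Now I would set $h(l,n):=D^n_{i_l}$ for $l<m$ and $n\in C$. This $h$ meets the hypothesis: for distinct $n,n'\in C$ the supports $D^n_{i_l}$ and $D^{n'}_{i_{l'}}$ lie in the disjoint blocks $S^n\setminus R$ and $S^{n'}\setminus R$, while for a fixed $n$ the vectors $\chi_{D^n_{i_1}},\dots,\chi_{D^n_{i_m}}$ are linearly independent by construction, so the whole family $\{h(l,n):l<m,\ n\in C\}$ is linearly independent. The hypothesis then yields $(z_1,\dots,z_m)\in G^m$ that is an accumulation point of $Z:=\big\{\big(\sum_{\mu\in D^n_{i_l}}x_\mu\big)_{1\le l\le m}:n\in C\big\}$. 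Finally, let $\psi\colon G^m\to G^k$ be the continuous map whose $j$-th coordinate is $\psi(s)_j=r_j+\sum_{l=1}^{m}c_{jl}s_l$ (a composition of projections, the group operation and translations). From $\chi_{D^n_j}=\sum_l c_{jl}\chi_{D^n_{i_l}}$ one gets $g^n=\psi\big(\big(\sum_{\mu\in D^n_{i_l}}x_\mu\big)_{1\le l\le m}\big)$ for every $n\in C$; since $n\mapsto g^n$ is injective on $C$, so is $\psi$ on $Z$, and hence $\psi(z_1,\dots,z_m)$ is an accumulation point of $\{g^n:n\in C\}$, and hence also of $(g^n)_{n\in\omega}$. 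This gives the countable compactness of $G^k$.

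The main obstacle is the normalisation step: one must arrange, by finitely many pigeonhole passes on a countable set, that the root $R$, the traces $E^n_i\cap R$, the rank $m$, the distinguished basis $\{i_1,\dots,i_m\}$ and the matrix $(c_{jl})$ are \emph{simultaneously} constant in $n$, and then check that the resulting family $\{D^n_{i_l}:l<m,\ n\in C\}$ is genuinely linearly independent — which uses both the ``disjoint blocks for distinct $n$'' property from the $\Delta$-system and the within-block independence from the choice of basis. Once the array is in this normal form, the rest is routine bookkeeping together with the continuity of $\psi$.
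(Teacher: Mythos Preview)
Your argument is correct. The paper, however, does not give its own proof of this lemma: it is stated without proof and attributed to \cite{SzTo14}, so there is no in-paper argument to compare against. The $\Delta$-system plus pigeonhole normalisation you carry out---extracting a common root, stabilising the $\mathbb{F}_2$-linear type of the residual supports to obtain an $h$ with $1\le m\le k$ meeting the hypothesis, and then pushing the resulting accumulation point forward through the continuous affine map $\psi\colon G^m\to G^k$---is the standard route for reductions of this kind and is essentially how the result is proved in the cited source.
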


 Given a positive integer $k$. we shall prove, under the assumption of $CH$,  the existence of an independent  set $\{x_\xi:\, \xi <{\mathfrak c}\}\subseteq \{0, 1\}^{\co}$ so that the group generated by this set, $G=\langle \{ x_\xi :\, \xi < {\mathfrak c}\} \rangle$ is $HFD$ with property $(P)$, $G^k$ is countably compact and $G^{k+1}$ is not selectively pseudocompact.  In the second section, we list the conditions that the  generators must have and, based on these conditions, we shall give the details of the topological properties of the group  $G$. In the last section, we proceed to the technical constructions which allows to define such generators with the required properties.

\section{The topological group}

The family of open sets that will guarantee  the destruction of  selectively  pseudocompactness of our topological group will be the one constructed in the next lemma, which is an extension of Lemma 2.1 from \cite{gt}.

\begin{lemma}\label{open} Let $G$ be an non-discrete topological group of order $2$ with identity $0$ and let $k < \omega$ be positive. Then there is a countable family $\{U_n : \, n < \omega\}$ of nonempty open subsets of  $G^{k+1}$ such that if $x_n \in U_n$, for an arbitrary $n < \omega$, then  the set $\{ x_n(i) : i \leq k \ \text{and} \ n < \omega \}$ is a linearly independent set in $G$.
\end{lemma}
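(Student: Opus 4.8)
The first move is to isolate the following special case: there is a sequence $(O_{m})_{m<\omega}$ of nonempty open subsets of $G$ such that whenever $y_{m}\in O_{m}$ for all $m<\omega$, the set $\{y_{m}:m<\omega\}$ is linearly independent in $G$. Granting this, partition $\omega$ into the consecutive intervals $I_{n}=\{(k+1)n,(k+1)n+1,\dots,(k+1)n+k\}$ and put $U_{n}=\prod_{i\le k}O_{(k+1)n+i}$, a nonempty open subset of $G^{k+1}=\prod_{i\le k}G$. Since $(n,i)\mapsto (k+1)n+i$ is a bijection of $\omega\times\{0,\dots,k\}$ onto $\omega$, any choice $x_{n}\in U_{n}$ $(n<\omega)$ makes $\{x_{n}(i):i\le k,\ n<\omega\}$ pick out exactly one point of each $O_{m}$, hence a linearly independent set. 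So it suffices to treat $G^{1}$.

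\textbf{Shape of the $O_{m}$ and the condition to arrange.} For $G^{1}$ I would search for $O_{m}=a_{m}+V_{m}$, where $(V_{m})_{m<\omega}$ is a decreasing sequence of open neighbourhoods of $0$ and the "centres" $a_{m}\in G$ are chosen by recursion. If $y_{m}=a_{m}+v_{m}$ with $v_{m}\in V_{m}$ and $\emptyset\ne F\in[\omega]^{<\omega}$, then, since $G$ has exponent $2$,
\[
\textstyle\sum_{m\in F}y_{m}=\big(\sum_{m\in F}a_{m}\big)+\big(\sum_{m\in F}v_{m}\big),
\]
so it is enough to guarantee $\sum_{m\in F}a_{m}\notin\sum_{m\in F}V_{m}$ for every nonempty finite $F\subseteq\omega$. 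This one family of conditions forces $\sum_{m\in F}y_{m}\ne 0$ for all nonempty $F$, which is precisely linear independence of $\{y_{m}:m<\omega\}$ (the cases $|F|\le 2$ also yield $0\notin\{y_{m}\}$ and pairwise distinctness).

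\textbf{The main obstacle: controlling the perturbations.} The delicate point — the step I expect to be the real obstacle — is keeping $\sum_{m\in F}V_{m}$ inside a fixed set \emph{uniformly in $F$}: knowing only $V_{m+1}+V_{m+1}\subseteq V_{m}$ is not enough, because the sum of many $V_m$'s can escape any prescribed neighbourhood as $|F|$ grows. In the setting of this paper $G$ is a non-discrete subgroup of $2^{\omega_{1}}$, hence has a neighbourhood base at $0$ of open subgroups, and, being infinite, carries a strictly decreasing chain of open subgroups $V_{0}\supsetneq V_{1}\supsetneq\cdots$. For nested subgroups $\sum_{m\in F}V_{m}=V_{\min F}$, so the condition becomes $\sum_{m\in F}a_{m}\notin V_{\min F}$. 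Now pick recursively $a_{m}\in V_{m-1}\setminus V_{m}$ (with $V_{-1}:=G$); these are nonzero, and for a nonempty $F$ with $j=\min F$ we have $\sum_{m\in F,\,m>j}a_{m}\in V_{j}$ (each such $a_{m}\in V_{m-1}\subseteq V_{j}$, and $V_{j}$ is a subgroup), while $a_{j}\notin V_{j}$; hence $\sum_{m\in F}a_{m}=a_{j}+(\text{element of }V_{j})\notin V_{j}$, as required. The same computation shows $a_{p}\notin a_{q}+V_{q}$ for $p\ne q$, so the $O_{m}$, and therefore the $U_{n}$, come out pairwise disjoint — which is what is actually wanted for the subsequent application.

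\textbf{The general case.} For an arbitrary non-discrete group $G$ of exponent $2$ the only use of the ambient $2^{\omega_1}$ was this uniform control. When $G$ is not precompact there is a neighbourhood $N$ of $0$ not covered by finitely many of its translates; then choose $V_{-1}=N\supseteq V_{0}\supseteq V_{1}\supseteq\cdots$ with $V_{m}+V_{m}\subseteq V_{m-1}$, a downward induction gives $\sum_{m\in F}V_{m}\subseteq V_{\min F-1}\subseteq N$ for every nonempty finite $F$, and one picks the centres recursively with $a_{m}$ lying outside the $2^{m}$ translates $N+\sum_{i\in F}a_{i}$ ($F\subseteq\{0,\dots,m-1\}$); this keeps $\sum_{m\in F}a_{m}\notin N\supseteq\sum_{m\in F}V_{m}$ for all $F$. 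Thus in either regime the proof reduces to neutralizing the "wiggle room" of a long sum of selected points, either by a chain of open subgroups or by the failure of total boundedness.
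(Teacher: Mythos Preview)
Your reduction to the one-factor case is exactly what the paper does: it sets $U_n=\prod_{i\le k}W_{n(k+1)+i}$ and invokes Lemma~2.1 of \cite{gt}, which supplies nonempty open sets $W_n\subseteq G$ with $0\notin\sum_{n\in F}W_n$ for every nonempty finite $F\subseteq\omega$ --- precisely your condition on the $O_m$. The paper does not reprove that cited lemma, whereas you do; your argument via a strictly decreasing chain of open subgroups is correct and already covers the only case the paper needs (there $G\le 2^{\omega_1}$). Your non-precompact paragraph is also correct. The one point you leave implicit in the general statement is why the two regimes exhaust all non-discrete Boolean groups: a \emph{precompact} Boolean group embeds in its compact completion, which is a power of $\{0,1\}$, so it inherits a neighbourhood base of open subgroups at $0$ and your first construction applies. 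With that remark added, your proof is complete and strictly subsumes the paper's, which treats the one-dimensional input as a black box.
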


\begin{proof} By Lemma 2.1 of \cite{gt}, we can find a sequence of nonempty open sets $\{W_n : \, n < \omega\}$ of $G$ such that $0 \notin \sum_{n\in F}W_n$ for each nonempty finite subset $F$ of $\omega$. Then, for each $n < \omega$, we define $U_n = \prod_{i\leq k} W_{n(k+1)+i}$. For every $n < \omega$, choose $x_n \in U_n$.  By the definition and the properties of the family $\{W_n : \, n < \omega\}$, it is easy to see that  $\{ x_n(i) : i \leq k \ \text{and} \ n < \omega \}$ is  linearly independent in $G$.
\end{proof}

For each $\xi <{\mathfrak c}$, the coordinates of  $x_\xi \in \{0, 1\}^{\co}$ will be  defined inductively.
To do that we shall need to consider the set $[\co]^{<\omega}$ as a vector space over the field $\{0, 1\}$ with the symmetric difference as its group operation and,
for each $\alpha < \co$, we shall construct (see Theorem \ref{main}) suitable  homomorphisms $\psi_\alpha: [\co]^{<\omega} \to \{0,1\}$ which shall determine the coordinates of the $x_\xi$'s by defining $x_\xi(\alpha) = \psi_{\alpha}(\{\xi\})$ for each $\alpha, \xi < \co$. The following  easy lemma will be fundamental to guarantee the linear independence of the generators.

\begin{lemma}\label{cero} Let $\{x_\xi : \xi < \co \}$ be an independent subset of $\{0, 1\}^\co$ and let $\{F_n:\, n < \omega \} \subseteq [\co]^{<\omega} \setminus \{\emptyset\}$. Then the set $\{ \sum_{\xi \in F_n}x_\xi : n < \omega \}$ is independent iff $\{F_n:\, n < \omega \}$ is a linearly independent subset of the vector space $[{\mathfrak c}]^{<\omega}$ over the field $\{0, 1\}$.
\end{lemma}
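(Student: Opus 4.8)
The plan is to recast both sides of the equivalence as statements about linear independence in vector spaces over the field $\{0,1\}$, and then read off the conclusion from a single injective linear map.

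The first point to record is that, for subsets not containing the zero vector, the notion of independence from the introduction coincides with linear independence over $\{0,1\}$. Indeed, in any group of exponent $2$ — such as $\{0,1\}^{\co}$ with coordinatewise addition, or $[\co]^{<\omega}$ with symmetric difference — a relation $n_i x_i = 0$ just says that $n_i$ is even, so for a finite set of nonzero elements the requirement ``$\sum_i n_i x_i = 0$ implies $n_i x_i = 0$ for all $i$'' is exactly the requirement that no sum over a nonempty subfamily vanishes. In $[\co]^{<\omega}$ the singletons $\{\{\xi\}:\xi<\co\}$ form a basis and $\sum_{\xi\in F}\{\xi\}=F$ for every $F\in[\co]^{<\omega}$, with $\emptyset$ playing the role of the zero vector; thus linear independence of $\{F_n:n<\omega\}$ means precisely that $\sum_{n\in S}F_n\neq\emptyset$ for every nonempty finite $S\subseteq\omega$, where the sum in $[\co]^{<\omega}$ is the symmetric difference.

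Next I would introduce the unique $\{0,1\}$-linear map $T:[\co]^{<\omega}\to\{0,1\}^{\co}$ with $T(\{\xi\})=x_\xi$ for all $\xi<\co$; by linearity $T(F)=\sum_{\xi\in F}x_\xi$. Since $\{x_\xi:\xi<\co\}$ is independent, hence linearly independent over $\{0,1\}$, the map $T$ is injective: any element of its kernel is a vanishing sum $\sum_{\xi\in F}x_\xi$, which forces $F=\emptyset$. Because each $F_n\neq\emptyset$, it follows that $T(F_n)\neq 0$, and $T$ restricts to a bijection of $\{F_n:n<\omega\}$ onto $\{\sum_{\xi\in F_n}x_\xi:n<\omega\}$.

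Finally I would conclude by noting that for every finite $S\subseteq\omega$ one has $\sum_{n\in S}\bigl(\sum_{\xi\in F_n}x_\xi\bigr)=T\bigl(\sum_{n\in S}F_n\bigr)$, which by injectivity of $T$ is $0$ if and only if $\sum_{n\in S}F_n=\emptyset$. Combined with the reformulations of the preceding paragraph, this says exactly that $\{\sum_{\xi\in F_n}x_\xi:n<\omega\}$ is independent if and only if $\{F_n:n<\omega\}$ is linearly independent in $[\co]^{<\omega}$. There is no genuine obstacle here — the lemma is essentially bookkeeping — and the only place the hypotheses enter is the injectivity of $T$, which rests on the independence of $\{x_\xi:\xi<\co\}$, together with the exclusion of $\emptyset$ from the $F_n$'s, needed to guarantee that the images are nonzero.
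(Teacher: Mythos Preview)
Your argument is correct. The paper itself gives no proof of this lemma---it is stated as an ``easy lemma'' and left to the reader---so there is nothing to compare against; your approach via the injective $\{0,1\}$-linear map $T:[\co]^{<\omega}\to\{0,1\}^{\co}$, $T(\{\xi\})=x_\xi$, is exactly the natural way to justify it and is entirely in the spirit of how the paper treats $[\co]^{<\omega}$ as a vector space.

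One very minor remark: your phrasing ``$\sum_{n\in S}F_n\neq\emptyset$ for every nonempty finite $S\subseteq\omega$'' tacitly treats $\{F_n:n<\omega\}$ as an indexed family rather than a bare set (otherwise a repetition $F_n=F_m$ with $n\neq m$ would give a counterexample while the underlying set could still be independent). This reading is consistent with how the paper uses linear independence elsewhere (e.g.\ for the families $\{h(i,n):i<m,\ n\in C\}$), so no change is needed, but it is worth being aware of.
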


First of all, we  fix some suitable enumerations:

\smallskip

Put  $I= [\omega +\omega, \co)$.

\noindent $\bullet$ For each $\xi \in I$, fix $t_\xi \in \{0, 1\}^{\xi}$ so that
for every $\alpha <{\co}$ and for every $x \in \{0, 1\}^\alpha$ there exists $\xi \in I$ such that $\xi > \alpha$ and $t_\xi |_{\alpha} =x$.

\noindent $\bullet$  Let $\{ h_\xi :\, \xi \in I\}$ be an enumeration of all  functions $h:\, m \times C \longrightarrow [{\mathfrak c}]^{<\omega}$, where $1\leq m \leq k$
and $C \in [\omega]^\omega$,  satisfying that the set $\{ h(i,n):\, i<m \text{ and } n\in C\}$ is linearly independent, and $\bigcup_{(i,n)\in m_\xi \times C_\xi}h_\xi (i,n) \subseteq \xi$ for each $\xi \in I$.  Put  $dom(h_\xi) = m_\xi \times C_\xi$ for each $\xi \in I$.

\noindent $\bullet$ Let $\{(f_\xi,\vec{F}_\xi):\, \xi \in I\}$ and $\vec{F}_\xi = (F^0_\xi, \ldots , F_\xi^k)$ be an enumeration of  all pairs $(f,\vec{F})$, where $\vec{F}=(F^0, \ldots , F^k) \in ([{\mathfrak c}]^{<\omega})^{k+1}$ and  $f:\,  \omega \longrightarrow [{\mathfrak c}]^{<\omega}$ is a function such that $\{ f(n):\,  n < \omega \}$ is linearly independent,   the dimension of the quotient of $[\xi]^{<\omega}$ by the subgroup $\langle\{ f_\xi(n):\,  n < \omega \}\rangle$ is infinite, and $\big(\bigcup_{n < \omega} f_\xi(n) \big) \cup \big(\bigcup_{i\leq k} F^i_\xi\big) \subseteq \xi$.

\medskip

To carry out the construction, let us  state the conditions that the  homomorphisms $\psi_\alpha$'s should have when  $\alpha  \in I$:

\medskip

$a)$ $\psi_\alpha (F^0_\alpha)=1$ if $F^0_\alpha \neq \emptyset$ for each $\alpha \in I$;

$b)$ $\psi_\alpha (\{\xi\})=t_\xi (\alpha)$ for each $\alpha \in I$ and  $\xi \in I$ with $\alpha < \xi$; and

$c)$ $\{ n < \omega :\, \forall i \leq  k \big(\psi_\alpha(F^i_\alpha ) = \psi_\alpha [f_\alpha(n(k+1)+i)] \big) \}$ is finite
for each $\alpha \in I$.

\noindent  For every $\gamma <\beta \in I$, we shall consider the  homomorphism $\Psi |_{[\gamma, \beta)}: [{\mathfrak c}]^{<\omega} \longrightarrow \{0, 1\}^{[\gamma, \beta)}$ defined by $\Psi |_{[\gamma , \beta)}(F) = \big(\psi_\xi(F)\big)_{\gamma \leq \xi < \beta} $ for each $F \in  [{\mathfrak c}]^{<\omega}$. Then, we have that:

$d)$ $\{ (\Psi_{[\gamma , \beta)}[h_\gamma(0,n)],..., \Psi_{[\gamma , \beta)}[h_\gamma(m_{\gamma}-1,n)]) :\, n \in C_\gamma\}$ is dense in $\big(\{0, 1\}^{[\gamma , \beta )}\big)^{m_\gamma}$ for each $\gamma <\beta  \in I$.

\noindent Let us see in the next theorem that all these four conditions will give us the desired topological group.

\begin{theorem}\label{meromero}{\bf [CH]}   Fix a positive integer $k$ and consider the   homomorphisms $\{ \psi_\alpha: [\co]^{<\omega} \to \{0,1\} :  \alpha < \co\}$  satisfying the conditions $a)-d)$ from above.  For each $\xi < \co$, we define  $x_\xi \in \{0, 1\}^\co$ by  $x_\xi(\alpha) = \psi_{\alpha}(\{\xi\})$ for each $\alpha < \co$. $G=\langle \{ x_\xi :\, \xi < {\mathfrak c}\} \rangle$ is $HFD$ with property $(P)$, $G^k$ is countably compact and $G^{k+1}$ is not selectively pseudocompact.
\end{theorem}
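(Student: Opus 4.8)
The plan is to read off each of the four conclusions from the conditions $a)$–$d)$, using Lemmas~\ref{cero}, \ref{open}, \ref{menosuno} and Proposition~\ref{proposition.dt}; throughout we use $CH$ only to know that $\co=\omega_1$, so every countable subset of $\co$ is bounded, and we record the identity $\big(\sum_{\mu\in F}x_\mu\big)(\alpha)=\sum_{\mu\in F}\psi_\alpha(\{\mu\})=\psi_\alpha(F)$ for finite $F\subseteq\co$ and $\alpha<\co$, valid because $\psi_\alpha$ is a homomorphism of $([\co]^{<\omega},\triangle)$ and $F$ is the symmetric difference of the singletons $\{\mu\}$, $\mu\in F$. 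I would first check that $\{x_\xi:\xi<\co\}$ is independent, i.e.\ (Lemma~\ref{cero}, or directly) that $\sum_{\mu\in F}x_\mu\neq 0$ for every nonempty $F\in[\co]^{<\omega}$: choose $\alpha\in I$ with $F\subseteq\alpha$ and some $f\colon\omega\to[\co]^{<\omega}$ with linearly independent range and support inside $\alpha$ for which $[\alpha]^{<\omega}/\langle\{f(n):n<\omega\}\rangle$ is infinite dimensional (take the $f(n)$ to be singletons of distinct points and $\alpha$ large), and, since $\{(f_\xi,\vec F_\xi):\xi\in I\}$ lists all such pairs, arrange $f_\alpha=f$ and $F^0_\alpha=F$; then $a)$ gives $\psi_\alpha(F)=1$. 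Consequently $G$ is a Boolean group, $F\mapsto\sum_{\mu\in F}x_\mu$ is an isomorphism of $([\co]^{<\omega},\triangle)$ onto $G$, and by Lemma~\ref{cero} linear independence of a family of nonempty finite sets corresponds exactly to independence of the matching family in $G$.

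Next I would treat property $(P)$ and the $HFD$ property. Under $CH$, property $(P)$ is equivalent to surjectivity of every initial projection $\pi_{[0,\beta)}\colon G\to\{0,1\}^\beta$ ($\beta<\co$), since an initial segment $\beta$ is a countable set and conversely any $A\in[\co]^\omega$ is contained in some $\beta$. For this, $b)$ together with the genericity of the family $\{t_\xi:\xi\in I\}$ lets $x_\xi|_{[\omega+\omega,\beta)}$ realize any prescribed pattern for cofinally many $\xi$, while the homomorphisms $\psi_\alpha$ with $\alpha<\omega+\omega$ built in Theorem~\ref{main} are arranged so that $\pi_{[0,\omega+\omega)}[G]=\{0,1\}^{[0,\omega+\omega)}$; putting these together (using that $G$ is a group) yields $(P)$. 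For $HFD$: given an infinite $T\subseteq G$, write $T\setminus\{0\}=\{\sum_{\mu\in H_n}x_\mu:n<\omega\}$ with the $H_n\in[\co]^{<\omega}\setminus\{\emptyset\}$ distinct; then $\langle\{H_n:n<\omega\}\rangle$ is infinite dimensional (else $T$ would be finite), so some infinite $N\subseteq\omega$ has $\{H_n:n\in N\}$ linearly independent, the function $h\colon 1\times N\to[\co]^{<\omega}$, $h(0,n)=H_n$, is admissible with bounded support, hence equals some $h_\xi$ with $m_\xi=1$, and $d)$ for $h_\xi$ says exactly that the projection of $\{\sum_{\mu\in H_n}x_\mu:n\in N\}$ — hence of $T$ — onto the coordinates $[\xi,\co)$ is dense. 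Thus every infinite subset of $G$ is finally dense, i.e.\ $G$ is $HFD$.

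To get countable compactness of $G^k$ I would verify the hypothesis of Lemma~\ref{menosuno}. Fix $h\colon m\times C\to[\co]^{<\omega}$ with $1\le m\le k$, $C\in[\omega]^\omega$ and $\{h(i,n):i<m,n\in C\}$ linearly independent (the type of function occurring in Lemma~\ref{menosuno}), and set $S=\{(\sum_{\mu\in h(i,n)}x_\mu)_{i<m}:n\in C\}\subseteq G^m$; the tuples are pairwise distinct because the $h(0,n)$ are, so $S$ is infinite. The key observation is that $S$ is $HFD$ in $\big(\{0,1\}^\co\big)^m$, reindexed as $\{0,1\}^\co$ through a bijection $\co\times m\to\co$ that sends each level $\beta\times m$ to an initial segment: any infinite subset of $S$ has the form $\{(\sum_{\mu\in h(i,n)}x_\mu)_{i<m}:n\in C'\}$ for some infinite $C'\subseteq C$, the restriction $h|_{m\times C'}$ is again of the above type with support inside that of $h$, hence equals some $h_{\xi'}$, and $d)$ for $h_{\xi'}$ asserts precisely that the projection of that subset onto the coordinates $\{(i,\alpha):\alpha\ge\xi'\}$ is dense — i.e.\ it is finally dense. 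Since $G$, hence $G^m$, has all initial projections onto (by the earlier argument), Proposition~\ref{proposition.dt} applied to $Y=G^m$ and this $S$ yields a cluster point of $S$ in $G^m$; by Lemma~\ref{menosuno}, $G^k$ is countably compact.

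Finally, for the failure of selective pseudocompactness of $G^{k+1}$: $G$ has exponent $2$ and is non-discrete (it is dense in $\{0,1\}^\co$, which has no isolated points, and it is infinite), so Lemma~\ref{open} provides nonempty open $U_n\subseteq G^{k+1}$ such that any choice of $x_n\in U_n$ makes $\{x_n(i):i\le k,\ n<\omega\}$ linearly independent in $G$. Were $G^{k+1}$ selectively pseudocompact, we would get $x_n\in U_n$ and a cluster point $x\in G^{k+1}$ of $\{x_n:n<\omega\}$; writing $x_n(i)=\sum_{\mu\in F^n_i}x_\mu$ and $x(i)=\sum_{\mu\in E_i}x_\mu$ with $F^n_i,E_i$ the unique finite supports, Lemma~\ref{cero} gives that $\{F^n_i:i\le k,\ n<\omega\}$ is linearly independent, so $f$ defined by $f(n(k+1)+i)=F^n_i$ has linearly independent range; enlarging a common bound, there is $\xi\in I$ above all the supports with $[\xi]^{<\omega}/\langle\{f(n):n<\omega\}\rangle$ infinite dimensional and $(f_\xi,\vec F_\xi)=(f,(E_0,\dots,E_k))$. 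Then $c)$ at $\alpha=\xi$ says $\{n<\omega:\forall i\le k\ \psi_\xi(E_i)=\psi_\xi(F^n_i)\}$ is finite, whereas reading off the $\xi$-th coordinate in each of the $k+1$ factors shows that $x$ being a cluster point of $\{x_n:n<\omega\}$ forces that same set to be infinite — a contradiction, so $G^{k+1}$ is not selectively pseudocompact. The step I expect to be most delicate is the one used for countable compactness, namely the observation that $d)$ (quantified over all $\beta$) makes \emph{every} admissible sequence genuinely $HFD$, so that Proposition~\ref{proposition.dt} can be invoked on the power $G^m$ and not merely on $G$; the one genuinely separate point is property $(P)$, whose verification must cope with the coordinates below $\omega+\omega$ that $a)$–$d)$ do not mention.
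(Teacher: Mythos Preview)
Your proposal is correct and follows essentially the same route as the paper: independence from $a)$, property $(P)$ from $b)$ and the choice of the $t_\xi$'s, $HFD$ of the relevant sequences in $G^m$ from $d)$ (feeding into Proposition~\ref{proposition.dt} and then Lemma~\ref{menosuno}), and failure of selective pseudocompactness of $G^{k+1}$ from Lemma~\ref{open} together with $c)$. You are in fact more careful than the paper at the one place you flag: the coordinates below $\omega+\omega$ are governed not by $a)$--$d)$ but by the explicit definition in Theorem~\ref{main}, which extends $b)$ to \emph{all} $\alpha<\xi$ for $\xi\in I$ and thereby yields $x_\xi|_\beta=t_\xi|_\beta$ directly (so your ``putting these together'' step is really just this single extended version of $b)$, not a combination of two separate surjectivity facts).
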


\begin{proof} Lemma \ref{cero}  and  condition  $a)$ imply  directly that the generators $\{ x_\xi : \xi < \co \}$ are linearly independent.
Fix  $0 < m \leq k$.  Next, we shall show that $\{ ( F(i,n))_{i<m}:\, n \in C\}$ has an accumulation point in the topological group  $G^m$ whenever $\{ F(i,n):\, i < m \wedge n \in C \}$ is a linearly independent of $G$.

Then, choose the function $h: m \times C \to [\co]^{< \omega}$ defined by $F(i,n)=\sum_{\mu \in h(i,n)} x_\mu$, for each  $(i,n) \in m \times C$. This $h$ is equal to   $h_\gamma$ for some $\gamma \in I$. Observe that
$$
\Psi |_{[\gamma , \beta)}(F) = \big(\psi_\xi(F)\big)_{\gamma \leq \xi < \beta} = \pi_{[\gamma , \beta)}(\sum_{\xi \in F}x_\xi)\footnote{Here, $ \pi_{[\gamma , \beta)}: \{0, 1\}^{\co} \to \{0,1\}^{[\gamma , \beta)}$ is the projection map.},
$$
 for every $F \in [\co]^{< \omega}$ and  $\gamma \leq \beta < \co$. So, we have that
$$
(\sum_{\xi \in F_n^0}x^0_{\xi},\ldots,\sum_{\xi \in F_n^m}x^m_{\xi}) =  (\Psi_{[\gamma , \beta)}[h_\gamma(0,n)],..., \Psi_{[\gamma , \beta)}[h_\gamma(m-1,n)])
$$
for every $ n \in C$. According to clause $d)$, we know that
$\{ (\sum_{\xi \in F_n^0}x^0_{\xi},\ldots,\sum_{\xi \in F_n^m}x^m_{\xi}) :\, n \in C\}$
is dense in $\big(\{0, 1\}^{[\gamma , \beta )}\big)^{m}$ for every $\gamma <\beta < \alpha$. Thus, we obtain that this sequence is finally dense in $(\{0,1\}^\co)^k$. Now, all its infinite subsets are also linearly independent, hence the sequence is $HFD$.
To prove that  $G^m$ has property  $(P)$ in $(\{0,1\}^{\omega_1})^m$ consider the projection map $\pi_\alpha: (\{0,1\}^{\omega_1})^m \to (\{0,1\}^\alpha)^m$ for an arbitrary  $\omega+\omega \leq \alpha < \omega_1$. Fix $y  = (y_0,\ldots,y_m) \in (\{0,1\}^\alpha)^m$. Then, for every $i \leq m$  we can find  $\xi_i \in I$  such
that $\xi_i > \alpha$ and $t_{\xi_i} |_{\alpha} = y_i$. By condition $b)$, we have that $t_{\xi_i} (\beta) = \psi_\beta (\{\xi_i\})= x_{\xi_i}(\beta) = y_i(\beta)$ for each $i \leq m$ and for each $\beta < \alpha$. This means that $\pi_\alpha(\sum_{i\leq m}x_{\xi_i}) = y$. Thus, we have proved that $G^m$ has property  $(P)$.
 By Proposition  \ref{proposition.dt}, we conclude that this sequence has an accumulation point in $G^m$. By Lemma \ref{menosuno}, it follows that $G^k$ is countably compact. Therefore, $G^m$  is strongly  pseudocompact for all positive integer $m \leq k$.

\medskip

It remains to  show that $G^{k+1}$ cannot be selectively  pseudocompact.  Choose a countable family $\{U_n : \, n < \omega\}$  consisting of  open subsets  of  $G^{k+1}$ given by Lemma  \ref{open}. For each  $n < \omega$ fix $y_n \in U_n $. Then the set $\{ y_n(i) : i \leq k \ \text{and} \ n < \omega \}$ is  linearly independent in $G$.
Fix $x = (\sum_{\xi \in F^0}x_\xi, \ldots , \sum_{\xi \in F^k}x_\xi) \in G^{k+1}$.  Define $f: \omega \to [\co]^{<\omega}$ so that $y_n(i) = \sum_{\xi \in f(n(k+1) +i)}x_\xi $ for each $i \leq k$ and $n \in \mathbb{N}$.  Now, choose $\beta \in I$ for which $(f,(F^0, \ldots , F^k)) = \big(f_\beta,(F_\beta^0, \ldots , F_\beta^k)\big)$.  By clause $c)$,  for every $i \leq k$ we know that
$$
\pi_\beta(\sum_{\xi \in F^i}x_\xi) = \sum_{\xi \in F^i}x_\xi(\beta) = \sum_{\xi \in F^i}\psi_\beta(\{\xi\}) =  \psi_\beta(F^i)  = \psi_\beta [f(n(k+1)+i)] = y_n(i)(\beta) = \pi_\beta\big(y_n(i)\big)
$$
holds for finitely many $n \in \mathbb{N}$.
It then follows that $\{ \{ y_{n(k+1)+i}:\, \leq k \}:\, n< \omega\}$ does not have an accumulation point in $G^{k+1}$.
Therefore, the family of open sets $\{ \prod_{i<K+1} U_{(k+1)n+i}:\, n< \omega\}$ witnesses that $G^{K+1}$ is not selectively  pseudocompact.
\end{proof}

\section{The construction}

In this section, we shall construct the homomorphisms $ \psi_\alpha$'s that will allow us to define the generators of the desired topological group.

\medskip

{\bf Notation}:  In what follows, for a positive $k < \omega$,  the italic letter ${\mathfrak B}$ will stand for the $\co$-sized set of functions  of the form $f:\, m_f \times C_f \longrightarrow [\co]^{<\omega}$, where  $1\leq m_f \leq k$ and $C_f \in [\omega]^\omega$, such that the set $\{f(i,n):\, i < m_f \ \text{ and } \ n \in C_f \}$ is  linearly independent.

\begin{lemma} \label{Lemma1} Let $\omega \leq \alpha < \co$ and $k$ a positive integer. If $\{ F^i:\, i \leq k \} \subseteq [\alpha]^{<\omega}$ and  ${\mathcal B}$  is a countable subset of ${\mathfrak B}$ such that $\bigcup_{(i,n) \in m_f  \times C_f}f(i,n) \subseteq \alpha$ for every $f \in \mathcal{B}$, then there exists  a homeomorphism $\psi:\, [\alpha]^{<\omega} \longrightarrow \{0, 1\}$ such that:\,
\begin{enumerate}
\item[i)] $\psi (F^0)=1$ if $F^0\neq \emptyset$;

\item[ii)] $ \{ n < \omega : \, \forall i \leq k \big(\psi(F^i)=\psi( \{ n(k+1)+i\})\big) \}$ is finite; and

\item[iii)] $\{ n < \omega:\, \forall i< m_f\big(\psi (f(i,n))=\sigma (i) \big)\}$ is infinite  for every $f \in {\mathcal B}$ and  $\sigma \in \{0,1\}^{m_f}$.
\end{enumerate}
\end{lemma}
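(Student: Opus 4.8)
I would identify $[\alpha]^{<\omega}$ with the vector space over $\{0,1\}$ whose basis is the set of singletons $\{\xi\}$, $\xi<\alpha$ (symmetric difference being the addition), so that a homomorphism $\psi\colon[\alpha]^{<\omega}\to\{0,1\}$ is nothing but a set $A\subseteq\alpha$, via $\psi(F)=|F\cap A|\bmod 2$; the task is then to build $A$. I would construct $A$ by a recursion of length $\omega$ producing an increasing chain $\delta_0\subseteq\delta_1\subseteq\cdots$ of finite subsets of $\alpha$ together with compatible maps $a_s\colon\delta_s\to\{0,1\}$, and put $A=\bigcup_s a_s^{-1}(1)$, with points outside $\bigcup_s\delta_s$ left out of $A$. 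Fix an enumeration, with infinitely many repetitions, of the (countably many) pairs $(f,\sigma)$ with $f\in\mathcal B$ and $\sigma\in\{0,1\}^{m_f}$. At stage $0$ put $\delta_0=F^0\cup\cdots\cup F^k$ and choose $a_0$ so that $|F^0\cap a_0^{-1}(1)|$ is odd whenever $F^0\neq\emptyset$; this settles clause (i) once and for all and fixes the bits $b_i:=\psi(F^i)$, with $b_0=1$ when $F^0\neq\emptyset$.

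Clause (iii) is handled inside the recursion. At a stage $s$ carrying the pair $(f,\sigma)$, I would choose $n\in C_f$ larger than all indices used so far and such that $\{\,f(i,n)\setminus\delta_s:\ i<m_f\,\}$ is linearly independent in $[\alpha\setminus\delta_s]^{<\omega}$; then I would set $\delta_{s+1}=\delta_s\cup\bigcup_{i<m_f}f(i,n)$ and extend $a_s$ over the new points so as to force $\psi(f(i,n))=\sigma(i)$ for every $i<m_f$ --- this linear system is solvable precisely because of the independence just arranged, and the values are permanent because $\bigcup_{i<m_f}f(i,n)$ is now entirely decided. Only finitely many $n\in C_f$ must be skipped: for each nonempty $S\subseteq m_f$ the elements $\sum_{i\in S}f(i,n)$, $n\in C_f$, are linearly independent --- an immediate consequence of the linear independence of $\{f(i,n):i<m_f,\ n\in C_f\}$ --- hence pairwise distinct, hence all but finitely many avoid the finite set $[\delta_s]^{<\omega}$. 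Running over the enumeration (with its repetitions) yields clause (iii).

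The real difficulty --- and the step I expect to be the main obstacle --- is making clause (ii) coexist with clause (iii). Call $\{m(k+1),\dots,m(k+1)+k\}$ the $m$-th block. Since $b_0=1$, an index $n$ is harmless for (ii) as soon as $\chi_A$ disagrees with $b$ at some coordinate of the $n$-th block; the threat is that a clause-(iii) stage is forced to put a point $q=n'(k+1)+i'$ into $A$ with $b_{i'}=1$. I would neutralise each such threat by a repair step performed right afterwards: choose a still-undecided coordinate $i''\neq i'$ of the $n'$-th block and decree $\chi_A$ there to equal $1-b_{i''}$, keeping $n'$ harmless forever. This repair fails only if the whole $n'$-th block has already been decided, and the crux is to ensure this happens for finitely many blocks only. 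That is forced by tightening the choice of $n$ in the clause-(iii) step --- one also requires that $\bigcup_{i<m_f}f(i,n)$ together with $\delta_s$ contain no full block it meets (again only finitely many $n\in C_f$ are lost, since $\delta_s$ is finite and the $\sum_{i\in S}f(i,n)$ are pairwise distinct) --- together with exploiting the non-uniqueness of the solutions of the defining linear system to keep out of $A$, whenever possible, any coordinate $q$ with $b_{q\bmod(k+1)}=1$; and one devotes every other stage to visiting the blocks $0,1,2,\dots$ in order, putting a fresh coordinate of the current block to disagree with $b$ if that block is not yet settled, so that a block untouched by clause-(iii) steps gets settled when it is visited. (When $F^0=\emptyset$ one has $b=\vec 0$ and clause (ii) reads ``$A$ meets all but finitely many blocks''; the same scheme works once one additionally forbids a clause-(iii) step from emptying an entire block.) Finally $\psi:=\bigcup_s a_s$, extended by $0$ off $\bigcup_s\delta_s$, is the required homomorphism, and clauses (i)--(iii) have been arranged.
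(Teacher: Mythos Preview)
Your overall architecture---building $\psi$ by an $\omega$-length recursion on finite partial homomorphisms, fixing (i) at stage $0$, and handling (iii) by choosing for each pair $(f,\sigma)$ an $n\in C_f$ with $\{f(i,n)\setminus\delta_s:i<m_f\}$ independent in $[\alpha\setminus\delta_s]^{<\omega}$---is correct and is exactly the skeleton of the paper's proof (your pigeonhole argument for the existence of such $n$ is the paper's ``Claim'').

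The gap is in your treatment of the interaction between (ii) and (iii). Your ``tightening''---requiring that $\delta_s\cup\bigcup_{i<m_f}f(i,n)$ contain no full block it meets---need not exclude only finitely many $n$: for $k=1$, $m_f=1$, $f(0,n)=\{2n,2n+1\}$, the set $\bigcup_i f(i,n)$ \emph{is} the $n$-th block for every $n$, so your condition rules out all of $C_f$. The justification you give (``$\delta_s$ finite and the $\sum_{i\in S}f(i,n)$ pairwise distinct'') simply does not address this situation. Your fallback---``exploit the non-uniqueness of solutions to keep out of $A$, whenever possible, any $q$ with $b_{q\bmod(k+1)}=1$''---is never turned into an argument: you do not show that some solution of the $m_f$ linear equations avoids making every newly-completed block equal to $b$, and with several blocks completed at once this is not automatic. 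The case $F^0=\emptyset$ is handled just as loosely.

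What is missing is the one inequality the whole lemma rests on, namely $m_f\le k<k+1$. The paper exploits it by keeping $E_l\cap\omega$ an \emph{initial segment} $N_l(k+1)$ at every stage: after choosing $n_l$ for the $(f_l,\sigma_l)$-task it takes $N_{l+1}$ large enough that $\omega\cap\bigcup_i f_l(i,n_l)\subseteq N_{l+1}(k+1)$, so every block $n\in[N_l,N_{l+1})$ consists of $k{+}1$ singletons that are independent of $E_l$. A straightforward dimension count then shows that, even after adjoining the $m_{f_l}\le k$ vectors $f_l(i,n_l)$ and the already-chosen $\{r(k+1)+i_r\}$ for $N_l\le r<n$, at least one singleton $\{n(k+1)+i_n\}$ of block $n$ remains outside the span; the paper sets $\phi_{l+1}(\{n(k+1)+i_n\})=1-\phi_{l+1}(F^{i_n})$, making block $n$ harmless the moment it is first touched. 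This replaces your tighten/repair/non-uniqueness trio with a single clean step, and it is the point your sketch needs to incorporate.
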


\begin{proof} Enumerate the set of all  pairs $(f,\sigma)$ such that $ f\in {\mathcal B}$ and $ \sigma \in \{0,1\}^{ m_f}$
as $\{(f_n,\sigma_n):\, n < \omega \}$ so that each $(f,\sigma)$ appears infinitely often. Choose a positive integer $N_0$ for which
$\omega \cap (F^0 \cup \ldots \cup F^k)\subseteq N_0(k+1)$. Set $E_0:= F^0 \cup \ldots \cup F^ k \cup N_0(k+1)$ and
fix an homomorphism $\phi_0 :\, [E_0]^{<\omega} \longrightarrow \{0, 1\}$   so that $\phi_0(F^0)=1$ whenever $F^0\neq \emptyset$.
Suppose that for a positive integer $l$  we have defined the sets $\{N_s:\, s \leq l\}$, $\{n_s:\, s<l\}$, $\{E_s:\, s\leq l \}$ and $\{\phi_s:\, s \leq l\}$
so that the following hols:
\begin{enumerate}
\item[$a)$] $\{N_s:\, s\leq l\}$ is a strictly increasing sequence of positive integers;

\item[$b)$] $\{n_s:\, s <l\}$ is a strictly increasing sequence of positive integers;

\item[$c)$] $E_s:= F^0 \cup \ldots \cup F^k \cup N_s(k+1) \cup \big(\bigcup\{ f_r(i,n_r):\, r<s \text{ and } i <m_r \}\big)$;

\item[$d)$] for each $ s\leq l$, $\phi_s :\, [E_s]^{< \omega} \longrightarrow \{0, 1\}$ is a homomorphism that  extends $\phi_r$ for every $r<s$;

\item[$e)$] if $N_s \leq  n < N_{s+1}$, then there exists $i <k+1$ such that $ \phi_s(F^i)\neq \phi_s(\{n(k+1)+i\})$ for each $s < l$;

\item[$f)$] $( \bigcup \{ f_r(i,n_r):\, r<s \text{ and } i< m_{f_r} \}) \cap \omega \subseteq N_s(k+1)$ for each $s\leq l$; and

\item[$g)$] $\phi_s(f_s(i,n_s))=\sigma_s(i)$ for each $s<l$ and $i<m_{f_s}$.
\end{enumerate}
We will construct $n_l$, $N_{l+1}$, $E_{l+1}$ and $\phi_{l+1}$.

\smallskip

{\bf Claim.} For every $E \in [\co]^{< \omega}$ there is  $n < \omega$ such that the set $\{ \{\beta\}:\, \beta \in E \} \cup \{f_l(i,n):\, i <m_{f_l}\}$ is linearly independent.

\smallskip

{\it Proof of the Claim:} In fact, suppose that this is not the case. Then for each $n< \omega$ there exists a non-empty set $F_n$ of $m_{f_l}$ such that $\sum_{k \in F_n}f_l(k,n)\in [E]^{<\omega}$. As the group $[E]^{<\omega}$ is finite, it follows that there exist two distinct integers $m_0$ and $m_1$ such that $\sum_{i \in F_{m_0}}f_l(i,m_0)=\sum_{i \in F_{m_1}}f_l(i,m_1)$, but this contradicts the assumption that $\{f_l(i,n):\, i<m_{f_l} \text{ and } n < \omega\}$ is linearly independent.

\medskip

By the Claim, we can find $n_l < \omega$ such that $n_{l-1} < n_l$ and  $\{ \{\beta\}:\, \beta \in E_l \} \cup \{f_l(i,n_l):\, i <m_{f_l}\}$ is linearly independent. Choose a positive integer $N_{l+1}$ so that  $N_{l+1} > N_l$ and $\omega \cap \big(\bigcup_{i<m_{f_l}}f_l(i,n_l)\big) \subseteq N_{l+1}(k+1)$.
By condition $f)$ for $l$, we obtain  that condition $f)$ for $l+1$ is satisfied. Define
$$
E_{l+1}:= F^0 \cup \ldots \cup F^k \cup N_{l+1}(k+1) \cup \big(\bigcup\{ f_s(i,n_s):\, s<l+1 \text{ and } i <m_s \}\big).
$$
It is  straightforward to see that conditions $a)$, $b)$ and $c)$ are satisfied for $l+1$.

\medskip

Recall that the set $\{ \{\beta\}:\, \beta \in E_l\} \cup \{f_l(i,n_l):\, i <m_{f_l}\}$ was chosen to be independent and the set $\{ \{\beta\}:\, \beta \in E_l\}\cup \{ \{N_l(k+1) +i\}:\, i< k+1 \}$ is independent because of clause $f)$ for $l$. Since $|\bigcup_{i<m_{f_l}}f_l(i,n_l)|< |\{ \{N_l(k+1) +i\}:\, i<k+1 \}|$, it follows that there exists $i_{N_l}$ for which the set
$$
\{ \{\beta\}:\, \beta \in E_l\} \cup \{f_l(i,n_l):\, i <m_{f_l}\}\cup \{\{ N_l(k+1)+i_{N_l}\}\}
$$
is linearly independent. Thus, we may proceed inductively to find
$i_n<k+1$, for each $n\in [N_l, N_{l+1})$, such that  $\{ \{\beta\}:\, \beta \in E_l\} \cup \{f_l(i,n_l):\, i <m_{f_l}\}\cup \{\{ n(k+1)+i_{n}\}:\, N_l\leq n <N_{l+1}\}$ is linearly independent. Hence, it is possible to define a homomorphism  $\phi_{l+1}:\, [E_{l+1}]^{<\omega} \longrightarrow \{0, 1\}$ that satisfies the following:
\begin{enumerate}
\item[$A)$] $\phi_{l+1}(\{\beta\})=\phi_l(\{\beta\})$ for each $\beta \in E_l$;

\item[$B)$] $\phi_{l+1}(\{n(k+1)+i_n\})=1-\phi_{l+1}(F^{i_n})$ for each $N_l\leq n <N_{l+1}$ and

\item[$C)$] $\phi_{l+1} (f(i,n_l))=\sigma_l(i)$ for each $i<m_{f_l}$.
\end{enumerate}
It follows from condition $A)$ that $\phi_{l+1}$ extends $\phi_l$ and so condition $d)$ holds. Condition  $e)$ follows directly from $B)$ and condition $C)$ implies $g)$.

\medskip

Let $\psi$ be an extension of $\bigcup_{l<\omega}\phi_l$. Condition $i)$ follows from the fact that $\psi$ extends $\phi_0$.
 According to $e)$, if $N_{l} \leq n$, then there exists $i <k+1$ such that $ \phi_l(F^i)\neq \phi_s(\{n(k+1)+i\})$. This clearly implies that the set
$ \{ n < \omega : \, \forall i \leq k \big(\psi(F^i)=\psi( \{ n(k+1)+i\})\big) \}$ is finite  and so condition $ii)$ is satisfied.
Let  $f \in {\mathcal B}$ and  $\sigma \in \{0,1\}^{m_f}$. Choose $s < \omega$ so that $f_s = f$ and $\sigma_s = \sigma$. By condition $g)$, we have that
 $\phi_s(f_s(i,n_s))=\sigma_s(i)$ for each $i<m_{f_s}$ and since the set  $\{n < \omega :\, (f_s,\sigma_s)=(f_n,\sigma_n)\}$ is infinite, we conclude that
 $\{n < \omega :\, \forall i< m_f \big(\psi(f(i,n))=\sigma(i)\big) \}$  is also infinite.
Then, condition $iii)$ holds.
\end{proof}

\begin{lemma} \label{Lemma2} Suppose that  $k$ is a positive integer and $\omega+\omega \leq \alpha <\omega_1$. Let $F^0, \ldots F^k \in [\alpha]^{<\omega}$ and let ${\mathcal B}$ be a countable subset of ${\mathfrak B}$ such that $\bigcup_{(i,n) \in m_f  \times C_f}f(i,n) \subseteq \alpha$ for each $f \in \mathcal{B}$. If $\{H_n:\, n < \omega\}$ is a linearly independent subset of $[\alpha]^{<\omega}$ such that the quotient $[\alpha]^{<\omega}$ by the group generated by  $\{H_n:\, n < \omega\}$ is infinite, then there exists a homomorphism $\psi:\, [\alpha]^{<\omega} \longrightarrow \{0, 1\}$ such that
\begin{enumerate}
\item[I)] $\psi(F^0)=1$ whenever $F^0 \neq \emptyset$;

\item[II)]  $\{ n< \omega :\,  \forall i \leq k\big(\psi (F^i)=\psi (H_{n(k+1)+i})\big) \}$ is finite; and

\item[III)]  $\{n \in C_f:\, \forall i < m_{f} \big(\psi(f(i,n))=\sigma(i) \big) \}$ is infinite for every $f \in {\mathcal B}$ and $\sigma \in \{0,1\}^{m_f}$.
\end{enumerate}
\end{lemma}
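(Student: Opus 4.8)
The plan is to mimic the structure of Lemma~\ref{Lemma1}, replacing the role there played by the singletons $\{n(k+1)+i\}$ by the prescribed linearly independent sets $H_{n(k+1)+i}$, and to arrange the construction so that the quotient of $[\alpha]^{<\omega}$ by $\langle\{H_n:n<\omega\}\rangle$ stays infinite at every finite stage; this is what lets us keep splitting off fresh basis vectors to both kill the accumulation point (clause~II)) and realize every $\sigma$ for every $f\in\mathcal B$ (clause~III)). First I would enumerate all pairs $(f,\sigma)$ with $f\in\mathcal B$, $\sigma\in\{0,1\}^{m_f}$ as $\{(f_n,\sigma_n):n<\omega\}$ with each pair appearing infinitely often, exactly as before. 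Then I would build increasing finite sets $E_l\subseteq[\alpha]^{<\omega}$ (now genuinely a collection of \emph{finite subsets of $\alpha$}, not of integers), increasing counters $N_l$, witnesses $n_l\in C_{f_l}$, and partial homomorphisms $\phi_l:\langle E_l\rangle\to\{0,1\}$ extending one another, where $E_{l+1}$ adjoins to $E_l$ the sets $f_l(i,n_l)$ ($i<m_{f_l}$) together with $H_{n(k+1)+i}$ for $n$ in the new block $[N_l,N_{l+1})$ and all $i\le k$.

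The key steps, in order: (1) prove the analogue of the Claim --- for every finite $E\subseteq[\alpha]^{<\omega}$ (viewed as a finite-dimensional subspace) there is $n\in C_{f_l}$ with $\{f_l(i,n):i<m_{f_l}\}$ linearly independent modulo $\langle E\rangle$; this is the same pigeonhole/finiteness argument using that $\{f_l(i,n):i<m_{f_l},n\in C_{f_l}\}$ is linearly independent. (2) Choose $n_l\in C_{f_l}$, $n_l>n_{l-1}$, with $\{f_l(i,n_l):i<m_{f_l}\}$ independent over $\langle E_l\rangle$, and pick $N_{l+1}>N_l$ with $N_{l+1}(k+1)$ large enough that the sets $H_{n(k+1)+i}$ for $n<N_{l+1}$, together with the $f_r(i,n_r)$ used so far and the $F^i$, involve only finitely many coordinates consistently handled --- but here the crucial point is different from Lemma~\ref{Lemma1}: since the quotient $[\alpha]^{<\omega}/\langle\{H_n:n<\omega\}\rangle$ is infinite, at each step the vectors $\{f_l(i,n_l)\}\cup\{\text{earlier data}\}$ span only a finite-dimensional subspace, so we may find for each $n\in[N_l,N_{l+1})$ an index $i_n\le k$ with $H_{n(k+1)+i_n}$ independent over the span of everything chosen so far; this is the replacement for the counting $|\bigcup f_l(i,n_l)|<k+1$. (3) Having the enlarged set $E_{l+1}$ together with a linearly independent subset, define $\phi_{l+1}$ extending $\phi_l$ by setting $\phi_{l+1}(H_{n(k+1)+i_n})=1-\phi_{l+1}(F^{i_n})$ for $n\in[N_l,N_{l+1})$ (this forces clause~II)) and $\phi_{l+1}(f_l(i,n_l))=\sigma_l(i)$ for $i<m_{f_l}$ (this, with infinite repetition of each $(f,\sigma)$, forces clause~III)). (4) Let $\psi$ be any homomorphic extension of $\bigcup_l\phi_l$ to all of $[\alpha]^{<\omega}$; verify I) from $\phi_0(F^0)=1$, II) from clause~$e)$-type control, and III) from the repetition argument, just as in Lemma~\ref{Lemma1}.

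The main obstacle I anticipate is step~(2): ensuring that we can always split off a fresh $H_{n(k+1)+i_n}$ --- i.e.\ that the $H$'s are not "used up." In Lemma~\ref{Lemma1} this was automatic because the relevant space was $[\alpha]^{<\omega}$ with singletons available and one could count dimensions crudely; here we must genuinely exploit the hypothesis that the quotient by $\langle\{H_n:n<\omega\}\rangle$ is infinite. The argument should be: at stage $l$ we have committed to finitely many linear conditions on $\psi$ (the values on $E_l$ and the $f_l(i,n_l)$), spanning a finite-dimensional subspace $V_l\subseteq[\alpha]^{<\omega}$; because the $H_n$ descend to an infinite linearly independent set in the (infinite) quotient $[\alpha]^{<\omega}/\langle\{H_n\}\rangle$ is not quite what we want --- rather we need that $\{H_n:n<\omega\}$ spans an infinite-dimensional subspace while $V_l$ is finite-dimensional, so infinitely many $H_n$ lie outside $V_l+\langle\{H_m:m\ne n\}\rangle$... one must phrase this carefully. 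The cleanest route is to note that since $\{H_n:n<\omega\}$ is linearly independent and $\dim V_l<\infty$, all but finitely many $H_n$ are independent over $V_l$, and then greedily pick, for each $n$ in the block, one surviving $i_n$ with $H_{n(k+1)+i_n}$ independent over the span of $V_l$ together with the already-chosen $H$'s in this block --- possible because we add at most $k+1$ vectors per $n$ and infinitely many indices remain available. Getting this bookkeeping exactly right, and confirming that the resulting enlarged set is linearly independent so that $\phi_{l+1}$ is well-defined, is where the real work lies; everything else is a transcription of the previous lemma.
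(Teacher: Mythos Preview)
Your approach is workable but takes a longer road than the paper does, and the part you flag as the ``main obstacle'' is where your reasoning becomes muddled.

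The paper's proof is a two-line reduction to Lemma~\ref{Lemma1}. Using the hypothesis that $[\alpha]^{<\omega}/\langle\{H_n:n<\omega\}\rangle$ is infinite (hence of dimension $|\alpha\setminus\omega|$, since $\alpha\ge\omega+\omega$), one extends $\{H_n:n<\omega\}$ to a basis $\{H_\xi:\xi<\alpha\}$ of $[\alpha]^{<\omega}$ and defines the isomorphism $\Phi:[\alpha]^{<\omega}\to[\alpha]^{<\omega}$ by $\Phi(H_\xi)=\{\xi\}$. Applying Lemma~\ref{Lemma1} to $\Phi(F^i)$ and to $h_f:=\Phi\circ f$ yields $\tilde\psi$, and then $\psi:=\tilde\psi\circ\Phi$ satisfies I)--III) immediately, since $\psi(H_{n(k+1)+i})=\tilde\psi(\{n(k+1)+i\})$. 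No new inductive construction is needed.

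Your plan is to redo the Lemma~\ref{Lemma1} induction with $H_{n(k+1)+i}$ in place of $\{n(k+1)+i\}$. This \emph{is} essentially the paper's proof with the change of basis unwound, but you have not carried out the crucial step correctly. The point is not that the quotient is infinite (that hypothesis plays no role in your direct approach; it is what lets the paper index the extended basis by $\alpha$). What you actually need at step $(2)$ is the same dimension count as in Lemma~\ref{Lemma1}: the $k+1$ vectors $H_{n(k+1)},\ldots,H_{n(k+1)+k}$ must be independent over the span $W_0$ of $E_l$ together with the previously chosen $H_{m(k+1)+i_m}$; then, since $W_n=W_0+\langle f_l(i,n_l):i<m_{f_l}\rangle$ exceeds $W_0$ by at most $k$ dimensions, at least one of the $k+1$ vectors lies outside $W_n$. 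But unlike the singleton case, the independence of $H_{n(k+1)},\ldots,H_{n(k+1)+k}$ over $W_0$ is not automatic: $E_l$ contains the $F^i$ and the earlier $f_r(i,n_r)$, which may entangle arbitrarily many $H_m$'s. Your sentences ``we add at most $k+1$ vectors per $n$ and infinitely many indices remain available'' and the appeal to the quotient do not address this; you need instead to arrange (by choosing $N_l$ after seeing the finite-dimensional intersection $\langle E_l^{\text{non-}H}\rangle\cap\langle\{H_m\}\rangle$, or equivalently by working in the basis $\{H_\xi\}$) that all $H_m$ with $m\ge N_l(k+1)$ are independent over the non-$H$ part of $E_l$. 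Once you see that, you have effectively performed the change of basis, and the cleaner route is simply to invoke Lemma~\ref{Lemma1} as the paper does.
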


\begin{proof} By the hypothesis, there exists $\{H_\xi:\, \omega \leq \xi < \alpha\}$ such that $\{H_n:\, n< \omega \}\cup \{H_\xi:\, \omega \leq \xi < \alpha\}$
is a basis for $[\alpha ]^{<\omega}$. Consider the homomorphism $\Phi:[\alpha]^{<\omega} \to[\alpha]^{<\omega}$ defined by $\Phi(H_\xi)=\{\xi\}$ for each $\xi < \alpha$.

For each  $f \in {\mathcal B}$ choose $h_f:\, m_{h_f} \times C_{h_f} \longrightarrow [\alpha]^{<\omega}$ so that $h_f(i,n)=\Phi(f(i,n))$ for all $i<m_{f}$ and $n\in C_{f}$.
Put $\mathcal{A} = \{ h_f:\, f \in {\mathcal B}\}$. Notice that $\{h_f(i,n):\, i <m_f \text{ and } n \in C_f\}$ is linearly independent for every $f \in \mathcal{B}$.
for every $f \in \mathcal{B}$. Now, we apply Lemma \ref{Lemma1} to obtain a homomorphism $\tilde{\psi}: [\alpha]^{<\omega} \to [\alpha]^{<\omega}$ such that:
\begin{enumerate}
\item[$i)$] $\tilde{\psi}(\Phi(F^0))=1 $ if $\Phi(F^0)\neq \emptyset$;

\item[$ii)$] $\{ n < \omega:\, \forall i<k+1 \big(\tilde{\psi}(\Phi(F^i))=\tilde{\psi}(\{n(k+1)+i\}) \big)\}$ is finite; and

\item[$iii)$] $\{n \in C_{f}:\,  \forall i<m_{f} \big(\tilde{\psi}(h_f(i,n))=\sigma(i) \big)\}$ is infinite for each $f \in {\mathcal B}$ and  $\sigma \in \{0,1\}^{m_f}$.
\end{enumerate}
Define $\psi= \tilde{\psi}\circ\Phi$. From clause $i)$ it follows that $\psi(F^0)=1 $ if $\Phi(F^0)\neq \emptyset$ which is equivalent to $F^0\neq \emptyset$ since $\Phi$ is an isomorphism. Thus, $I)$ holds. According to condition $ii)$ and the fact $\Phi(H_n)=\{n\}$ for every $n < \omega$, we have  that $ \{ n < \omega:\,  \forall i<k+1 \big(\psi(F^i)=\psi(H_{n(k+1)+i})\big)\}$ is finite. Thus, $II)$ is satisfied. By condition $iii)$ and the definitions of ${\mathcal B}$ and $\Phi$, we obtain  that
$\{n \in C_f:\,  \forall i<m_f\big(\psi(f(i,n))=\sigma(i)\big)\}$ is infinite for each $f \in {\mathcal B}$ and $\sigma \in \{0,1\}^{m_f}$, which is condition $III)$.
\end{proof}

Now, we turn out to the construction of the required homeomorphisms in the next theorem. To explain the properties of such homomorphisms, it is necessary to use
 the enumerations from the  second section, which will be listed again for the reader's convenience:

\smallskip

\noindent $\bullet$ Set $I= [\omega +\omega, \omega_1)$.

\noindent $\bullet$ For each $\xi \in I$, fix $t_\xi \in \{0, 1\}^{\xi}$ so that
for each $\alpha <{\omega_1}$ and for each $x \in \{0, 1\}^\alpha$ there exists $\xi \in I$ such that $\xi > \alpha$ and $t_\xi |_{\alpha} =x$.

\noindent $\bullet$  Let $\{ h_\xi :\, \xi \in I\}$ be an enumeration of all  functions $h:\, m \times C \longrightarrow [{\mathfrak c}]^{<\omega}$, where $1\leq m \leq k$
and $C \in [\omega]^\omega$,  satisfying that the set $\{ h(i,n):\, i<m \text{ and } n\in C\}$ is linearly independent, and $\bigcup_{(i,n)\in m_\xi \times C_\xi}h_\xi (i,n) \subseteq \xi$ for each $\xi \in I$.  Put  $dom(h_\xi) = m_\xi \times C_\xi$ for each $\xi \in I$.

\noindent $\bullet$ Let $\{(f_\xi,\vec{F}_\xi):\, \xi \in I\}$ be an enumeration of  all pairs $(f,\vec{F})$, where $\vec{F}=(F^0, \ldots , F^k) \in [[{\mathfrak c}]^{<\omega}]^{k+1}$ and  $f:\,  \omega \longrightarrow [{\mathfrak c}]^{<\omega}$ is a function such that $\{ f(n):\,  n < \omega \}$ is linearly independent,   the dimension of the quotient of $[\xi]^{<\omega}$ by the subgroup $\langle\{ f(n):\,  n < \omega \}\rangle$ is infinite, and $\big(\bigcup_{n < \omega} f(n) \big) \cup \big(\bigcup_{i\leq k} F^i_\xi\big) \subseteq \xi$.

\begin{theorem}\label{main}{\bf [CH]}  For every $\alpha < \co$, there is a homomorphism $\psi_\alpha:\, [{\mathfrak c}]^{<\omega} \longrightarrow \{0, 1\}$  such that:

$a)$ $\psi_\alpha (F^0_\alpha)=1$ if $F^0_\alpha \neq \emptyset$ for each $\alpha \in I$;

$b)$ $\psi_\alpha (\{\xi\})=t_\xi (\alpha)$ for each $\alpha \in I$ and  $\xi \in I$ with $\alpha < \xi$; and

$c)$ $\{ n < \omega :\, \forall i \leq  k \big(\psi_\alpha(F^i_\alpha ) = \psi_\alpha [f_\alpha(n(k+1)+i)] \big) \}$ is finite
for each $\alpha \in I$.

\noindent  For every $\gamma <\beta \in I$, we let $\Psi |_{[\gamma, \beta)}: [{\mathfrak c}]^{<\omega} \longrightarrow \{0, 1\}^{[\gamma, \beta)}$ be the   homomorphism defined by $\Psi |_{[\gamma , \beta)}(F) = \big(\psi_\xi(F)\big)_{\gamma \leq \xi < \beta} $ for each $F \in  [{\mathfrak c}]^{<\omega}$. Then, we must have that:

$d)$ $\{ (\Psi_{[\gamma , \beta)}[h_\gamma(0,n)],..., \Psi_{[\gamma , \beta)}[h_\gamma(m_{\gamma}-1,n)]) :\, n \in C_\gamma\}$ is dense in $\big(\{0, 1\}^{[\gamma , \beta )}\big)^{m_\gamma}$ for each $\gamma <\beta  \in I$.
\end{theorem}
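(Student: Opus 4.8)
The plan is to construct the homomorphisms $\psi_\alpha$ by recursion on $\alpha<\co=\omega_1$ (this is where $CH$ is used), applying Lemma \ref{Lemma2} at each stage $\alpha\in I$. Conditions $a)$, $b)$, $c)$ come essentially for free. If $\alpha\notin I$ (that is, $\alpha<\omega+\omega$) there is nothing to arrange on $[\alpha]^{<\omega}$, so I put $\psi_\alpha\equiv 0$ there. If $\alpha\in I$, I feed Lemma \ref{Lemma2} the data $F^i:=F^i_\alpha$ ($i\le k$) and $H_n:=f_\alpha(n)$ ($n<\omega$): the enumeration of the pairs $(f_\xi,\vec{F}_\xi)$ guarantees $\bigcup_{i\le k}F^i_\alpha\cup\bigcup_{n<\omega}f_\alpha(n)\subseteq\alpha$, that $\{f_\alpha(n):n<\omega\}$ is linearly independent, and that the quotient of $[\alpha]^{<\omega}$ by $\langle\{f_\alpha(n):n<\omega\}\rangle$ is infinite, which are precisely the hypotheses of that lemma (with $H_{n(k+1)+i}=f_\alpha(n(k+1)+i)$). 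Supplied moreover with a countable $\mathcal B_\alpha\subseteq{\mathfrak B}$ whose members are supported inside $\alpha$ (specified below), Lemma \ref{Lemma2} returns a homomorphism $\psi\colon[\alpha]^{<\omega}\to\{0,1\}$ satisfying clauses $I)$, $II)$, $III)$. I set $\psi_\alpha|_{[\alpha]^{<\omega}}:=\psi$ and extend $\psi_\alpha$ to $[\co]^{<\omega}$ by declaring $\psi_\alpha(\{\xi\})=t_\xi(\alpha)$ for $\xi\in I$ with $\xi>\alpha$, and $\psi_\alpha(\{\xi\})=0$ for the remaining singletons not yet determined. Since $F^i_\alpha,f_\alpha(n)\subseteq\alpha$, clause $I)$ is exactly $a)$ and clause $II)$ is exactly $c)$; clause $b)$ holds by construction; and the extension affects only singletons $\{\xi\}$ with $\xi\ge\alpha$, which lie outside $[\alpha]^{<\omega}$, so nothing done at a later stage can disturb it. Everything therefore reduces to choosing the families $\mathcal B_\alpha$ so that clause $III)$, accumulated over all stages, yields $d)$.

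To secure $d)$ I maintain, alongside the $\psi_\alpha$, a tree of \emph{reservations}. For $\gamma\in I$ and a finite \emph{labelled chain} $s=\langle(\delta_1,\sigma_1),\dots,(\delta_r,\sigma_r)\rangle$ with $\gamma\le\delta_1<\dots<\delta_r<\omega_1$ and $\sigma_j\in\{0,1\}^{m_\gamma}$ (note $\delta_j\ge\gamma\ge\omega+\omega$, hence $\delta_j\in I$), I want an infinite set $C_{\gamma,s}\subseteq C_\gamma$, with $C_{\gamma,\langle\,\rangle}:=C_\gamma$ and $C_{\gamma,s'}\subseteq C_{\gamma,s}$ whenever $s$ is an initial segment of $s'$, such that $\psi_{\delta_j}(h_\gamma(i,n))=\sigma_j(i)$ for all $n\in C_{\gamma,s}$, $j\le r$ and $i<m_\gamma$; this $C_{\gamma,s}$ will be created at stage $\delta_r$. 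Accordingly, at stage $\alpha\in I$ I let $\mathcal B_\alpha$ consist of all functions $h_\gamma|_{m_\gamma\times C_{\gamma,s}}$ with $\gamma\in I\cap(\alpha+1)$ and $s$ a labelled chain all of whose labels are ordinals $<\alpha$ (when $\gamma=\alpha$ only the empty chain qualifies, contributing $h_\alpha|_{m_\alpha\times C_\alpha}$). Since $\alpha$ is a countable ordinal there are only countably many such pairs $(\gamma,s)$; each $C_{\gamma,s}$ has already been created at an earlier stage (or equals $C_\gamma$); each $h_\gamma|_{m_\gamma\times C_{\gamma,s}}$ is a restriction of the linearly independent $h_\gamma$, hence lies in ${\mathfrak B}$; and its support is contained in $\gamma\le\alpha$. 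So $\mathcal B_\alpha$ is a legitimate input for Lemma \ref{Lemma2}. Having obtained $\psi_\alpha$, I create, for every such $(\gamma,s)$ and every $\sigma\in\{0,1\}^{m_\gamma}$, the set
$$
C_{\gamma,\,s^\frown\langle(\alpha,\sigma)\rangle}:=\{\,n\in C_{\gamma,s}:\psi_\alpha(h_\gamma(i,n))=\sigma(i)\ \text{for every}\ i<m_\gamma\,\},
$$
which is infinite by clause $III)$ of Lemma \ref{Lemma2} applied to $f=h_\gamma|_{m_\gamma\times C_{\gamma,s}}\in\mathcal B_\alpha$ and the target $\sigma$. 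This defines the whole reservation tree by recursion.

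Finally, I would read off $d)$ as follows. Fix $\gamma<\beta$ in $I$ and a basic open box of $\big(\{0,1\}^{[\gamma,\beta)}\big)^{m_\gamma}$, i.e.\ a finite $E=\{\delta_1<\dots<\delta_r\}\subseteq[\gamma,\beta)$ and targets $\tau(\delta_l,i)\in\{0,1\}$ for $l\le r$, $i<m_\gamma$. Put $s=\langle(\delta_1,\tau(\delta_1,\cdot)),\dots,(\delta_r,\tau(\delta_r,\cdot))\rangle$; this is an admissible labelled chain since $\gamma\le\delta_l<\beta<\omega_1$. The recursion reaches the reservation $C_{\gamma,s}$: for $l=1,\dots,r$ in turn, at stage $\delta_l$ the reservation attached to the length-$(l-1)$ initial segment of $s$ (created at stage $\delta_{l-1}$, or equal to $C_\gamma$ when $l=1$) has already been defined, all of its labels are ordinals $<\delta_l$, and $\gamma\le\delta_l$, so the corresponding restriction of $h_\gamma$ lies in $\mathcal B_{\delta_l}$ and the one-step extension realizing $(\delta_l,\tau(\delta_l,\cdot))$ is carried out. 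Hence $C_{\gamma,s}$ is an infinite subset of $C_\gamma$ along which $\psi_{\delta_l}(h_\gamma(i,n))=\tau(\delta_l,i)$ for all $l\le r$ and $i<m_\gamma$. Since $\Psi|_{[\gamma,\beta)}(F)=\big(\psi_\xi(F)\big)_{\gamma\le\xi<\beta}$, any $n\in C_{\gamma,s}$ places the tuple $\big(\Psi|_{[\gamma,\beta)}[h_\gamma(0,n)],\dots,\Psi|_{[\gamma,\beta)}[h_\gamma(m_\gamma-1,n)]\big)$ inside the chosen box, which proves the required density and hence $d)$. The main obstacle is exactly the tension in the choice of $\mathcal B_\alpha$: Lemma \ref{Lemma2} only accepts a countable family supported below $\alpha$, whereas $d)$ demands that, for each of the uncountably many $\gamma\in I$, \emph{all} finitely supported targets on the uncountable coordinate set $[\gamma,\omega_1)$ be realized; the reservation tree is precisely the device reconciling the two, the decisive point being that a countable ordinal $\alpha$ has only countably many live pairs $(\gamma,s)$. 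Once this bookkeeping is in place, what remains is a routine verification against Lemma \ref{Lemma2} and the fixed enumerations.
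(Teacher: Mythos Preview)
Your argument is correct and shares the paper's overall architecture: a transfinite recursion on $\alpha<\omega_1$ (using $CH$), at each stage $\alpha\in I$ an appeal to Lemma~\ref{Lemma2} with $F^i=F^i_\alpha$ and $H_n=f_\alpha(n)$, followed by an extension to $[\co]^{<\omega}$ via $\psi_\alpha(\{\xi\})=t_\xi(\alpha)$ for $\xi>\alpha$. Conditions $a)$, $b)$, $c)$ are obtained identically in both proofs.

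The genuine difference is in how $d)$ is secured, i.e.\ in the choice of the countable family fed to Lemma~\ref{Lemma2}. The paper simply takes $\mathcal B=\{h_\beta:\beta\in I,\ \beta\le\gamma\}$ at stage $\gamma$; clause $III)$ then says that at the single coordinate $\gamma$ every target $\sigma\in\{0,1\}^{m_\beta}$ is realized along an infinite subset of $C_\beta$, and the paper passes directly from density on $[\beta,\gamma)$ (the inductive hypothesis) plus this surjectivity at $\gamma$ to density on $[\beta,\gamma]$. You instead enrich $\mathcal B_\alpha$ with all restrictions $h_\gamma|_{m_\gamma\times C_{\gamma,s}}$ coming from the previously built reservation tree, so that for \emph{every} finite labelled chain of coordinate--target pairs one obtains an explicit infinite witness set, and density on any finite support is read off from a single reservation. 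What your heavier bookkeeping buys is a fully self-contained verification of $d)$: the paper's one-line step ``density on $[\beta,\gamma)$ together with $III)$ at $\gamma$ gives density on $[\beta,\gamma]$'' is not justified there and does not follow formally, since an infinite set of $n$'s realizing a fixed $\sigma$ at coordinate $\gamma$ need not meet a prescribed basic open supported in $[\beta,\gamma)$. Your reservation tree is precisely the strengthening of the induction hypothesis that closes this gap, at the cost of a larger (but still countable) $\mathcal B_\alpha$.
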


\begin{proof} For $\alpha <\omega+\omega$, we  let  $\psi_\alpha:\, [{\mathfrak c}]^{<\omega} \longrightarrow \{0, 1\}$ be the homomorphism satisfying $\psi_\alpha (\{\xi\})=0 $ if $\xi < \omega +\omega$ and $\psi_\alpha (\{\xi\})= t_\xi (\alpha)$ for each $\xi \in I$.

\bigskip
 Now suppose that $\omega + \omega \leq \gamma < \omega_1$ and that  we have defined a suitable homeomorphism $\psi_\alpha:\, [{\mathfrak c}]^{<\omega} \longrightarrow \{0, 1\}$ for each $\alpha < \gamma$ so that the  following inductive conditions are satisfied:

$A)$ $\psi_\beta (F^0_\beta)=1$ if $F^0_\beta \neq \emptyset$ for each $\beta \in I \cap \gamma$;

$B)$ $\psi_\beta (\{\xi\})=t_\xi (\beta)$ for each $\beta \in I \cap \gamma$ and for each $\xi \in I$ with $\beta < \xi$;

$C)$ $\{ n < \omega :\, \forall i \leq  k \big(\psi_\beta [f_\beta((k+1)n+i)]= \psi_\beta(F^i_\beta ) \big) \}$ is finite
for each $\beta \in I \cap \gamma$; and

$D)$ $\{ (\Psi_{[\beta , \alpha)}[h_\beta(0,n)],..., \Psi_{[\beta , \alpha)}[h_\beta(m_{\beta}-1,n)]) :\, n \in C_\beta\}$ is dense in $\big(\{0, 1\}^{[\beta , \alpha )}\big)^{m_\beta}$ for each $\beta \in I \cap \gamma$ and for each $\beta <\alpha < \gamma$.

\noindent Now, we proceed to construct the homeomorphism $\psi_\gamma:\, [{\mathfrak c}]^{<\omega} \longrightarrow \{0, 1\}$. By applying  Lemma \ref{Lemma2} to $\gamma$, $\{F^0_\gamma, \ldots , F^k_\gamma\}$, ${\mathcal B} = \{ h_\beta : \beta \leq \gamma \}$ and
 $\{ f_\gamma(n) : n < \omega \}$,  we can find a homomorphism $\psi_\gamma:\, [\gamma]^{<\omega} \longrightarrow \{0, 1\}$ satisfying:

 $i)$ $\psi_\gamma(F^0_\gamma)=1$ if $F^0_\gamma \neq \emptyset$;

 $ii)$ $\{ n< \omega:\, \forall i \leq k \big(\psi_\gamma (f_\gamma (n(k+1)+i) = \psi_\gamma (F_\gamma^i) \big)\}$ is finite and

 $iii)$ $\{n \in C_{\beta}:\, \forall i < m_{h_\beta}\big (\psi_\gamma(h_{\beta}(i,n))=\sigma(i) \big) \}$ is infinite  for each  $\beta \leq \gamma$ and for each   $\sigma \in \{0,1\}^{m_{\beta}}$.

\noindent Next, we extend $\psi_\gamma$ to a homeomorphism $\psi_\gamma :\, [{\mathfrak c}]^{<\omega} \longrightarrow \{0, 1\}$ such that $\psi_{\gamma}(\{\xi\})=t_\xi (\gamma)$ for each $\xi \in I$ with $\gamma <\xi$. Thus, conditions $a)$ and $b)$ are clearly satisfied. It follows directly from clause $D)$ that

\smallskip

\noindent $(*)$ \ \  $\{ (\Psi_{[\beta , \gamma)}[h_\beta(0,n)],..., \Psi_{[\beta , \gamma)}[h_\beta(m_{\beta}-1,n)]) :\, n \in C_\beta\}$ is dense in $\big(\{0, 1\}^{[\beta , \gamma )}\big)^{m_\beta}$ for each $\beta \in I \cap \gamma$.

\smallskip

\noindent By condition $iii)$, we know that $\{ \big(\psi_\gamma(h_{\beta}(0,n)),...., \psi_\gamma(h_{\beta}(m_\beta -1,n))\big) : n \in C_\beta \} = \{0,1\}^{m_{\beta}}$.
Hence,

\smallskip

\noindent $(**)$ \ \  $\{ (\Psi_{[\beta , \gamma]}[h_\beta(0,n)],..., \Psi_{[\beta , \gamma]}[h_\beta(m_{\beta}-1,n)]) :\, n \in C_\beta\}$ is dense in $\big(\{0, 1\}^{[\beta , \gamma ]}\big)^{m_\beta}$ for each $\beta \in I \cap \gamma$.

\smallskip

\noindent Thus, condition $d)$ holds for $\gamma$. Condition $c)$ follows directly from condition $C)$ and $iii)$. This shows the theorem.
\end{proof}



\end{document}